\definecolor{myblue}{cmyk}{0.5, 0.1, 0.1, 0.1}
\numberwithin{equation}{section}
\newtheorem{theorem}{Theorem}[section]
\newtheorem{lemma}[theorem]{Lemma}
\theoremstyle{definition}
\newtheorem{example}[theorem]{Example}
\theoremstyle{remark}
\newtheorem{remark}[theorem]{Remark}
\newtheorem{condition}{Condition}[section]
\newcommand*\diff{\mathop{}\!\mathrm{d}}
\newcommand{\cC}{\mathcal{C}}
\newcommand{\EE}{\mathbb{E}}
\newcommand{\PP}{\mathbb{P}}
\newcommand{\cB}{\mathcal{B}}
\newcommand{\cA}{\mathcal{A}}
\newcommand{\cF}{\mathcal{F}}
\newcommand{\cD}{\mathcal{D}}
\newcommand{\supp}{\mbox{\rm supp}\,}
\newcommand{\NN}{\mathbb{N}}
\newcommand{\RR}{\mathbb{R}}
\newcommand{\dd}{\mathop{}\!\mathrm{d}}
\begin{document}
\author{Anita Behme\thanks{Technische Universit\"at Dresden,
		Institut f\"ur Mathematische Stochastik, 01062 Dresden, Germany, e-mail: anita.behme@tu-dresden.de \& claudius.luetke\_schwienhorst@tu-dresden.de} 
		\,\,and Claudius Lütke Schwienhorst\footnotemark[1]\,\,\textsuperscript{,}\thanks{Corresponding author.}} 
\title{Lévy Langevin Monte Carlo for sampling from heavy-tailed target distributions}
\maketitle

\begin{abstract}
We extend the Lévy Langevin Monte Carlo method studied by \cite{oechsler-levy-2024}: Choosing a heavy-tailed target distribution we prove convergence of a solution of a stochastic differential equation to this target. Hereby, the stochastic differential equation is driven by a compound Poisson process - unlike in the case of a classical Langevin diffusion. The method allows one to sample from non-smooth targets and distributions with separated modes with exponential convergence to the invariant distribution, which in general cannot be guaranteed by the classical Langevin diffusion in presence of heavy tails. The method is promising due to the possibility of a simple implementation because of the compound Poisson noise term.
\end{abstract}

\noindent
{\em AMS 2010 Subject Classifications:} \, primary:\,\,\, 60G51 · 60H10 · 60G10 \,\,\,
secondary: \,\,\, 65C05

\noindent
{\em Keywords:}
	Exponential convergence, Heavy-tailed target distribution, Invariant distributions, Langevin Monte Carlo, Lévy process, Markov Chain Monte Carlo, Stochastic differential equation

\section{Introduction}

Producing samples from a probability distribution is an important task in science and in particular for many machine learning algorithms. Having access to random samples of a probability distribution is relevant in various ways, for example for calculation of moments and high-dimensional integration in general, or uncertainty quantification in Bayesian statistics, just to name two, cf. \cite[Chapter 8]{chopin-introduction-2020} or \cite[Chapter 6]{gelman-bayesian-2013}.

Markov Chain Monte Carlo methods achieve this by constructing a Markov chain that has the desired distribution as invariant measure. If the chain is ergodic towards this measure one obtains random samples by simulating the chain forward in time. This work builds upon a particular instance of a Markov Chain Monte Carlo method which uses Langevin dynamics to produce samples. Consider a probability density $\pi$ on $\mathbb{R}^d$ and a Brownian motion $(W_t)_{t\geq 0}$ on the filtered probability space $(\Omega,\cF, (\mathcal{F}_t)_{t\geq 0},\PP)$. It is then well known that under appropriate conditions on $\pi$ the strong solution of the stochastic differential equation (SDE)
\begin{equation} \label{langevin diffusion}
	\dd X_t = \frac{\nabla\pi (X_t)}{\pi(X_t)} \dd t + \dd W_t,\quad t\geq 0, 
\end{equation}
converges towards its unique invariant measure which is given by $\bm{\pi}(\dd x)=\pi(x)\dd x$. We call the algorithm associated to the discretisation (via an Euler scheme) of this SDE \emph{Langevin Monte Carlo} (LMC).

In \cite{oechsler-levy-2024}, the author posed the question whether one can consider a more general version of the SDE \eqref{langevin diffusion} of the form
\begin{equation} \label{levy langevin diffusion}
	\dd X_t = \phi(X_t)\dd t + \dd L_t, \quad t\geq 0,
\end{equation}
where $L=(L_t)_{t\geq 0}$ is a general Lévy process and $\phi$ an appropriate drift function. It was shown in \cite{oechsler-levy-2024}, exploiting results w.r.t. invariant measures from \cite{behme-invariant-2024}, that $\phi$ and $L$ can be chosen such that the strong solution to \eqref{levy langevin diffusion} converges in total variation norm to the target measure. As in \cite{oechsler-levy-2024}, we call the algorithm associated to the discretisation of this SDE \emph{L\'evy Langevin Monte Carlo} (LLMC).

LLMC allows to overcome several weaknesses of the classical algorithm using LMC. In \cite{oechsler-levy-2024}, the focus has been on the setting where the target density is non-smooth or has separated modes. It is known that under these assumptions the continuous time solution of \eqref{langevin diffusion} already poses problems in terms of applicability due to the non-differentiability or very slow convergence speed and poor mixing. Beyond that, the Euler discretisation of this solution then inherits these difficulties. Another well-known weakness of the solution to \eqref{langevin diffusion} is its slow convergence to the target distribution in the presence of heavy tails, and again this is inherited by its discretisation. Therefore in this article we generalise the Lévy Langevin Monte Carlo method to allow for efficient sampling from heavy-tailed distributions.

\subsection{Relation to literature}

There is an enormous amount of literature on sampling algorithms based on the Langevin diffusion \eqref{langevin diffusion} and Markov chain Monte Carlo (MCMC) methods. The strengths of MCMC methods in general lie in the fact that they perform better than direct methods like inverse transform sampling or rejection sampling whenever distributions are more complex (e.g. multimodal) or high-dimensional. We refer to \cite{brooks-handbook-2011} and \cite{roberts-exponential-1996} for general overviews on MCMC and Langevin sampling.

The idea to generalise the Langevin diffusion to Lévy(-type) noise is not new. Works of different scope can be found in the literature. In \cite{simsekli-fractional-2017}, a sampling algorithm for which $(L_t)_{t\geq 0}$ is a stable process is put forward and further analysed in \cite{nguyen-non-asymptotic-2019}, in particular w.r.t. applications in non-convex optimisation. In \cite{eliazar-levy-2003}, various examples of Lévy-driven SDEs are discussed, in particular w.r.t. their stationary distributions. These works are similar with regard to their focus on practical aspects and their neglect of a rigorous theoretical analysis of the SDE, its invariant measure and ergodic behaviour. More precisely, \cite{simsekli-fractional-2017} and \cite{eliazar-levy-2003} disregard the fact that in general there is no equivalence between the stationary solution of a Fokker-Planck equation and the invariant measure of the associated Markov process, not to mention the question of uniqueness of a stationary solution of the Fokker-Planck equation.

In \cite{huang-approximation-2020}, the authors fill up the missing theoretical analysis in \cite{simsekli-fractional-2017} by carefully deriving assumptions on the coefficient of the stable process driven SDE and proving the ergodicity towards the unique invariant measure. Nevertheless, they leave unanswered which precise numerical scheme of the continuous time SDE would be employed. This is especially important in their setup, since it would need a discussion of the approximation of the drift term given by a fractional Laplacian and the noise given by a stable process.

In \cite{zhang-ergodicity-2023}, a generalised version of the SDE \eqref{levy langevin diffusion} is put forward, namely
\begin{equation*}
	\dd X_t = \phi(X_t)\dd t + \sigma(X_t)\dd L_t, \quad t\geq 0,
\end{equation*}
where $(L_t)_{t\geq 0}$ is a stable process. Again, the practical implementation of a sampling algorithm remains unclear, in particular the aforementioned approximation of drift and noise. In a sense, this work can be seen as a refinement of \cite{huang-approximation-2020}, since the chosen SDE allows for a non-constant noise coefficient and simpler to check conditions on the coefficients are given.

Both articles, \cite{huang-approximation-2020} and \cite{zhang-ergodicity-2023}, work out the rigorous mathematical arguments needed for the ideas presented in \cite{simsekli-fractional-2017}. They agree with the work of this text in using the Foster-Lyapunov criteria developed in \cite{meyn-stability-1992,meyn-stability-1993,meyn-stability-1993-1} to conclude ergodicity. Still, they differ to our approach with regards to the argument for a unique limiting distribution: Instead of showing the Feller property as done in \cite{huang-approximation-2020, zhang-ergodicity-2023}, here we use the distributional equation derived in \cite{behme-invariant-2024}. It has to be said, referring to \cite[Thm. 3.37]{liggett-continuous-2010}, that both articles fail to finalise their argument via the Feller property, since a rigorous argument why the test functions form a core of the generator is missing. 

Apart from these technicalities, our approach yields two advantages: First, in \cite{huang-approximation-2020} and \cite{zhang-ergodicity-2023}, the target density is assumed to be more regular than in this text in order to obtain greater regularity for the drift coefficient. Second, the driving noise is assumed to be a stable process unlike a compound Poisson process as done in this work. For a practical implementation this means: While the stable noise term must be approximated, the compound Poisson process can be simulated without truncation and results in a drift that in principle allows for Monte Carlo approximations due to finite-intensity jumps.

\subsection{Outline of this paper}

The article is structured as follows. In Section \ref{main}, we  begin with a specification of the SDE \eqref{levy langevin diffusion} in terms of the choice of drift coefficient and Lévy noise term and explain how this is justified according to \cite{behme-invariant-2024}.

In Section \ref{assumptions}, we state assumptions on the target distribution and the driving noise term. In Section \ref{Sec_heavy_tailed}, before stating the main result of this article, we introduce common classes of heavy-tailed distributions that are covered by it. Subsequent to the main theorem we elaborate in detail how the process has to be tuned in order to yield samples from these classes. In {Section \ref{Sec_simulate}} we conclude the main section with several simulations as a proof of concept.

Section \ref{Sec_proofs} contains the proof of the main result. Here, the argumentation will be split up in results concerning irreducibility and ergodicity of the approximating Markov process and why the unique invariant measure is actually the target distribution. We want to emphasise that Section \ref{uniqueness} describes a strategy to exploit the possibility to choose a non-smooth target density compared to the Langevin diffusion \eqref{langevin diffusion}. This choice is possible because the drift coefficient has a different form, but a different argument is then needed to show that the unique invariant measure of the Markov process is indeed the target distribution, since in general the solution to \eqref{levy langevin diffusion} lacks the Feller property.

\section{Main results} \label{main}

Let $(\Omega,\cF, (\mathcal{F}_t)_{t\geq 0},\PP)$ be a filtered probability space and assume that $(\mathcal{F}_t)_{t\geq 0}$ fulfills the usual assumptions of right continuity and completeness. Let $\pi$ be a probability density function on $(0,\infty)$ with $\pi>0$ almost everywhere. Further let $L=(L_t)_{t\geq 0}$ be a compound Poisson process with rate $1$ and jump distribution $\mu$ on $(0,\infty)$ adapted to $(\cF_t)_{t\geq 0}$.
We consider the strong solution to the SDE \eqref{levy langevin diffusion} with this noise term. Our choice of noise term implies that the solution process is a concatenation of the deterministic drift given by the solution $u(x,t)$ to the ordinary differential equation (ODE)
\begin{equation} \label{ode}
	\begin{cases}
		\frac{\dd}{\dd t}u(x,t)=\phi(u(x,t)) \\
		u(x,0) = x
	\end{cases}
\end{equation}
and the jumps induced by $L$.  We set as drift coefficient
\begin{equation} \label{drift}
	\phi(x):=-\frac{\overline{F}_\mu\ast\pi(x)}{\pi(x)}\mathds{1}_{(0,\infty)}(x), \quad x\in \RR,
\end{equation}
where $\overline{F}_\mu(x):=\mu((x,\infty))$ is the tail function of the jump distribution of $L$ and $\ast$ denotes the standard convolution of two functions. We denote by $F_\mu$ the distribution function associated to $\mu$. The assumptions specified below in Subsection \ref{assumptions} imply that \eqref{ode} has a unique solution for any initial value $x\in (0,\infty)$, and by the observation that the solution to \eqref{levy langevin diffusion} is a concatenation of this deterministic path and the process $L$ we conclude that it has a unique strong solution $(X_t)_{t\geq 0}$. As such it is a Markov process, see for example \cite[Thm. 6.4.5]{applebaum-levy-2009}.

Before being able to explain the choice of $\phi$ we shortly recall the concept of the infinitesimal generator of a Markov process and introduce the two different notions of invariance that are relevant for this study. We write  $\mathcal{C}_0((0,\infty))$ for the class of continuous functions on $(0,\infty)$ vanishing at infinity, and  $\mathcal{C}_c^\infty((0,\infty))$ for the class of infinitely often continuously differentiable functions with compact support defined on $(0,\infty)$, a class that we refer to as \emph{test functions}.

Let $(\Omega, \cF, (\cF_t)_{t\geq 0}, X=(X_t)_{t\geq 0}, \PP^x)$ be a universal Markov process with state space $(0,\infty)$ that is normal, i.e. such that $\PP^x(X_0=x)=1$ for all $x>0$, and denote the expectation w.r.t. $\PP^x$ by $\EE^x$. Then the \emph{pointwise infinitesimal generator} of $X$ is the pair $(\mathcal{A},\mathcal{D}(\mathcal{A}))$ defined by
\begin{equation} \label{pointwise generator}
	\mathcal{A}f(x):=\lim_{t\to 0}\frac{1}{t}\left(\EE^x\left(f(X_t)\right)-f(x)\right),\quad x\in (0,\infty),f\in\mathcal{D}(\mathcal{A}),
\end{equation}
where
\begin{equation*}
	\mathcal{D}(\mathcal{A}):=\bigg\{f\in\mathcal{C}_0((0,\infty))\,\bigg| \,\lim_{t\to 0}\frac{1}{t}\left(\EE^x\left(f(X_t)\right)-f(x)\right)\,\text{exists for all}\,x\in (0,\infty)\bigg\}.
\end{equation*}
The pointwise infinitesimal generator is an extension of the mostly used \emph{strong infinitesimal generator}, which is defined by a uniform limit in \eqref{pointwise generator}.

A probability measure $\nu$ on $(0,\infty)$ is called an \emph{invariant distribution} for the Markov process $X$ if 
\begin{equation*}
	\int_{(0,\infty)} \PP^x(X_t\in B) \nu(\dd x) = \nu(B)
\end{equation*}
for all $t\geq 0$ and $B\in\mathcal{B}((0,\infty))$, the Borel $\sigma$-algebra on $(0,\infty)$.
Further, a probability measure $\nu$ is \emph{infinitesimally invariant} for $X$ (or its associated generator $\cA$) if for all $f\in\mathcal{C}_c^\infty((0,\infty))$ it holds
\begin{equation*}
	\int_0^\infty \mathcal{A}f(x)\nu(\dd x) = 0,
\end{equation*}
where $\mathcal{A}$ is the pointwise infinitesimal generator of $X$.

As shown e.g. in \cite{behme-invariant-2024},  if we consider the SDE \eqref{levy langevin diffusion} with $L$ a compound Poisson process, an application of It\^o's formula and the Lévy-It\^o decomposition implies that the infinitesimal generator of the solution process of \eqref{levy langevin diffusion} has the form 
\begin{equation} \label{Levy-type operator}
	\mathcal{A}f(x)=\phi(x)f^\prime(x) + \int_0^\infty \left(f(x+z)-f(x)\right)\mu (\dd z),
\end{equation}
where the first term is associated to the drift and the second to the jump part. This is a particular kind of a so-called \emph{Lévy-type operator} which allows us to derive the following theorem as special case of \cite[Thm. 4.2]{behme-invariant-2024}.

\begin{theorem} \label{distributional equation}
	Let $(\mathcal{A},\mathcal{D}(\mathcal{A}))$ be a Lévy-type operator of the form \eqref{Levy-type operator} in $(0,\infty)$ and assume that $C_c^\infty((0,\infty))\subseteq \mathcal{D}(\mathcal{A})$. Then an absolutely continuous probability measure with density $\nu$ is infinitesimally invariant for $\cA$ if and only if
	\begin{equation}
		\phi(x)\nu(x)+\overline{F}_\mu\ast\nu (x) = c, \quad x>0,\,c\in\mathbb{R}.
	\end{equation}
\end{theorem}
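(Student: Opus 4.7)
The plan is to compute $\int_0^\infty \mathcal{A}f(x)\,\nu(x)\,\dd x$ directly for an arbitrary test function $f\in C_c^\infty((0,\infty))$ and rewrite it as a single integral against $f'$, weighted by exactly the expression $\phi\nu+\overline{F}_\mu\ast\nu$. Once that is achieved, the equivalence will follow from a standard variational argument together with a boundary analysis at $0^+$.

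First I would split the generator into its drift and jump parts:
\begin{align*}
\int_0^\infty \mathcal{A}f(x)\nu(x)\,\dd x
&= \int_0^\infty \phi(x)f'(x)\nu(x)\,\dd x \\
&\quad+ \int_0^\infty\!\!\int_0^\infty \bigl(f(x+z)-f(x)\bigr)\,\mu(\dd z)\,\nu(x)\,\dd x.
\end{align*}
The drift part is already in the desired shape. The key step is the jump part: using $f(x+z)-f(x)=\int_0^z f'(x+u)\,\dd u$ (legitimate because $f$ is smooth with compact support) and applying Fubini (justified since $\mu$ is a probability measure, $\nu$ a probability density and $f'$ is bounded with compact support), the inner integral over $z$ collapses to $\int_u^\infty \mu(\dd z)=\overline{F}_\mu(u)$. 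After the substitution $y=x+u$ and one more interchange of order, the jump term rearranges to
\begin{equation*}
\int_0^\infty f'(y)\int_0^y \overline{F}_\mu(y-w)\nu(w)\,\dd w\,\dd y
=\int_0^\infty f'(y)\,(\overline{F}_\mu\ast\nu)(y)\,\dd y.
\end{equation*}

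Combining the two pieces yields
\begin{equation*}
\int_0^\infty \mathcal{A}f(x)\,\nu(x)\,\dd x=\int_0^\infty f'(x)\,\bigl[\phi(x)\nu(x)+(\overline{F}_\mu\ast\nu)(x)\bigr]\,\dd x,
\end{equation*}
which immediately gives the ``if'' direction. For the ``only if'' direction I would invoke the fundamental lemma of the calculus of variations: since $f'$ can be any $C_c^\infty((0,\infty))$ function with zero mean as $f$ ranges over $C_c^\infty((0,\infty))$, vanishing of the integral for all $f$ forces $g:=\phi\nu+\overline{F}_\mu\ast\nu$ to be constant almost everywhere on $(0,\infty)$.

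The main obstacle, and the step requiring the most care, is identifying this constant as zero. For this I would use the boundary behavior at $0^+$: by dominated convergence $(\overline{F}_\mu\ast\nu)(x)\to 0$ as $x\to 0^+$ because the domain of integration shrinks to a point while the integrand is bounded by $\nu$. Combined with the fact that $\phi(x)\nu(x)\to 0$ as $x\to 0^+$ (a consequence of the structural assumptions on $\phi$ in the Lévy-type setting on the half-line, cf. the discussion preceding the theorem and the specific form \eqref{drift}), one obtains $g(0^+)=0$, so the constant vanishes and $\phi\nu+\overline{F}_\mu\ast\nu\equiv 0$ on $(0,\infty)$. The Fubini manipulations in the jump term are routine given the compactness of $\mathrm{supp}\,f$; the subtle point throughout is really just this boundary identification, which is the only place where one has to lean on the standing setup rather than a direct computation.
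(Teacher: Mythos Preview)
The paper does not give its own proof of this theorem: it is quoted as a special case of \cite[Thm.~4.2]{behme_invariant_2024}, so there is no in-paper argument to compare against. Your computation is the natural one and is correct up to the point where you arrive at
\[
\int_0^\infty f'(x)\,g(x)\,\dd x = 0\quad\text{for all }f\in C_c^\infty((0,\infty)),\qquad g:=\phi\nu+\overline{F}_\mu\ast\nu,
\]
and conclude that $g$ is almost everywhere equal to a constant $c$.

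The genuine gap is exactly where you flag it: identifying $c=0$. Your argument invokes $\phi(x)\nu(x)\to 0$ as $x\to 0^+$, but this is not a consequence of the hypotheses of the theorem as stated. The operator \eqref{Levy-type operator} involves a general drift $\phi$, not the specific choice \eqref{drift}, so you cannot appeal to the latter. Even if you do restrict to $\phi=-(\overline{F}_\mu\ast\pi)/\pi$, the standing assumption $\int_0^x\pi\leq cx\pi(x)$ only gives $|\phi(x)|\leq cx$ near $0$, and hence $|\phi(x)\nu(x)|\leq cx\nu(x)$; this does not force the limit to vanish for an arbitrary density $\nu\in L^1$. In other words, your boundary-limit heuristic is circular: you are using a property of $\phi\nu$ that is essentially what has to be proved.

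For comparison, when the paper itself later needs to force the analogous constant to vanish (in the proof of Theorem~\ref{uniqueness invariant measure}), it proceeds by a different and more robust route: using the positivity of $\nu$ together with the extra hypothesis $\inf\supp(\mu)>0$ to reduce \eqref{ode distributional solution} on a neighbourhood of $0$ to an ODE, then excluding $c>0$ by a sign argument and $c<0$ by showing it would force $\int_0^x\nu\geq c\int_0^x\phi^{-1}=\infty$ via the non-drift-to-zero assumption on $\pi$. That argument genuinely uses the standing setup; your $0^+$ limit does not, and does not close the gap.
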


Hence, if we consider \eqref{levy langevin diffusion} with drift \eqref{drift}, we conclude that $\bm{\pi}(\dd x) = \pi(x)\dd x$ is an infinitesimally invariant measure for $(X_t)_{t\geq 0}$.

In order to define \emph{ergodicity}, we define for every positive and measurable function $f\geq 1$ the \emph{$f$-norm} of a signed measure $\nu$ on the measure space $((0,\infty),\mathcal{B}((0,\infty)))$ by 
\begin{equation*}
	\|\nu\|_f:= \sup_{g\leq f} \abs{\nu(g)}:=\sup_{g\leq f} \abs{\int g(\omega)\nu(\dd \omega)}.
\end{equation*}
The $1$-norm is also called \emph{total variation norm}. We say that a process $(X_t)_{t\geq 0}$ is \emph{$f$-ergodic} if an invariant probability measure $\nu$ for $(X_t)_{t\geq 0}$ exists and for all $x\in (0,\infty)$ it holds that
\begin{equation} \label{f-ergodicity}
	\lim_{t\to\infty}\|\PP^x(X_t\in\cdot)-\nu\|_f = 0.
\end{equation}
We say that a process is \emph{exponentially $f$-ergodic} if it is $f$-ergodic and the convergence in \eqref{f-ergodicity} is of exponential speed, i.e. there is $\beta \in (0,1)$ and $B\in (0,\infty)$ s.t.
\begin{equation*}
	\|\PP^x(X_t\in\cdot)-\nu\|_f \leq Bf(x)\beta^t.
\end{equation*}
Especially, we will claim $f$-ergodicity w.r.t. to a \emph{norm-like function} $f$, i.e. a function $f:[0,\infty)\to[0,\infty)$ such that $f(x)\to\infty$ as $x\to\infty$.

\subsection{Assumptions} \label{assumptions}

For the target measure $\bm{\pi}(\dd x)=\pi(x)\dd x$ we assume throughout that $\supp(\bm{\pi}) = [0,\infty)$ and $\bm{\pi}$ is absolutely continuous with strictly positive density almost surely. In particular, we assume that $\inf_{x\in C}\pi(x)>0$ for any compact $C\subset (0,\infty)$. Further, we assume that there exists $c>0$ s.t. $\int_0^x\pi(z)\dd z \leq cx\pi(x)$ for $x\ll 1$. As discussed in \cite[Sec. 3.1]{oechsler-levy-2024} this assumption ensures that the solution to \eqref{levy langevin diffusion} with drift \eqref{drift} does not drift onto $0$.

Additionally, we assume that $\pi$ is piecewise Lipschitz continuous. This means that for a partition $(x_i)_{i\in\mathbb{Z}}$ of $(0,\infty)$ with $x_i < x_{i+1}$ and unique accumulation point $0$ we have that the restriction $\pi\vert_{(x_i,x_{i+1})}$ is Lipschitz continuous with constant $K_i$ for all $i\in\mathbb{Z}$.

Lastly, as mentioned before, we assume that the driving Lévy process $L=(L_t)_{t\geq 0}$ is a compound Poisson process with jump distribution $\mu$ and intensity $1$. This implies the representation
\begin{equation} \label{representation compound poisson}
	L_t = \sum_{k=1}^{N_t} \xi_k
\end{equation}
with a Poisson process $N=(N_t)_{t\geq 0}$ with rate $1$ and an i.i.d. family of random variables $\{\xi_k, k\in\NN\}$ that is independent of $N$ with $\xi_k$ distributed according to $\mu$. We also assume that $\mu$ has a Lipschitz continuous distribution function $F_\mu$.

\subsection{Sampling from heavy-tailed target distributions}\label{Sec_heavy_tailed}

There is a multitude of concepts of heavy-tailedness in the literature all agreeing on the condition that such tails should be heavier than exponential, meaning a distribution $F$ for which 
\begin{equation*}
	\lim_{x\to\infty}e^{tx} \overline{F}(x) = \infty
\end{equation*}
for all $t>0$. Here we use the common notation $\overline{F}(x)=1-F(x)$ for the tail function. We write $F\in\mathcal{H}$ if $F$ belongs to this class of heavy-tailed distributions. Although the formulation of our main result does not require a particular notion of heavy-tailedness, we shortly introduce two important classes of heavy-tailed distributions to get acquainted with the concept. We begin with the class of subexponential distributions that is often considered to be the broadest class of heavy-tailed distributions relevant to applied mathematics. A distribution $F$ on $[0,\infty)$ with unbounded support is called \emph{subexponential} if
\begin{equation}\label{def subexponentiality}
	\overline{F^{\ast 2}}(x) \sim 2\overline{F}(x),
\end{equation}
where $F^{\ast 2}(x)=F\ast F(x)$ is a convolution power and $\sim$ denotes asymptotic equivalence, i.e. $\lim_{x\to\infty}\overline{F^{\ast 2}}(x)/2\overline{F}(x)=1$. We write $F\in\mathcal{S}$ if $F$ belongs to the class of subexponential distributions. Note that $\mathcal{S}\subset\mathcal{H}$, see \cite[Lemma 3.8]{nair-fundamentals-2022}. This definition becomes meaningful with the observation that, assuming two independent copies $X_1,X_2\sim F$, the right hand side of \eqref{def subexponentiality} fulfills in probabilistic notation
\begin{equation*}
	2\PP \left(X_1>x\right) \sim \PP\left(\max(X_1,X_2)>x\right),
\end{equation*}
see \cite[Eq. (3.4)]{nair-fundamentals-2022}. This property is often called the \emph{principle of the single big jump} since the probability of $X_1+X_2$ to exceed a large value is eventually determined by the probability of either $X_1$ or $X_2$ to exceed it.

The second class of heavy-tailed distributions we want to emphasise are regularly varying distributions. We say that a distribution $F$ on $[0,\infty)$ is \emph{regularly varying at infinity with index $-\rho$}, where $\rho \geq 0$, if for any $\lambda >0$ we have
\begin{equation*}
	\frac{\overline{F}(\lambda x)}{\overline{F}(x)} \to \lambda^{-\rho},
\end{equation*}
as $x\to\infty$. We write $F\in\mathcal{RV}(-\rho)$ and $\mathcal{RV}:=\cup_{\rho\geq 0}\mathcal{RV}(-\rho)$. Note that $\mathcal{RV}\subset \mathcal{S}$, see \cite[Lemma 2.18]{nair-fundamentals-2022}.

We write $f_1\in O(f_2)$ if $\limsup_{x\to\infty}\abs{f_1(x)}/f_2(x)<\infty$. We will also use $\lesssim$ as the notation for an asymptotic inequality up to a positive constant, i.e.
\begin{equation*}
	f_1(x)\lesssim f_2(x) \quad\Leftrightarrow\quad f_1(x)\leq c f_2(x) \quad\text{eventually for large $x$ and some $c>0$}.
\end{equation*}
We are now in the position to formulate the main result.

\begin{theorem} \label{sampling}
	Let the assumptions of Section \ref{assumptions} hold. Further, assume that the Lévy measure $\mu$ has unbounded support, and the distribution function $F_\mu$ is such that:
	\begin{enumerate}
		\item $\mu(I)>0$ for any open interval $I\subset (0,\infty)$.
		\item There exists a norm-like function $h$ with locally bounded derivative $h^\prime$ s.t. 
		\begin{equation} \label{integrability h}
			\sup_{x\in (0,\infty)}\int_0^\infty \left(h(x+z)-h(x)\right) \mu (\dd z) < \infty,
		\end{equation}
		and 
		\begin{equation} \label{tail and monotonicity}
			\frac{h(x)}{h^\prime (x)}\pi(x) \lesssim \overline{F}_\mu (x).
		\end{equation}
	\end{enumerate}
	Then, the strong solution of \eqref{levy langevin diffusion} with $\phi$ as in \eqref{drift} and $L$ with Lévy measure $\mu$ is exponentially $h$-ergodic towards $\bm{\pi}$.
\end{theorem}

\begin{remark} $\,$
	\begin{enumerate}
		\item In practice, the Lévy measure $\mu$ has the role of a canonical distribution one can easily sample from with sufficiently heavy tails. This is encoded in \eqref{tail and monotonicity}. This condition can be understood as a generalised monotonicity condition on the density $\pi$ intertwined with a tail domination condition on the distribution $\bm{\pi}$ w.r.t. $\mu$, see Example \ref{reg var} below.
		\item Various choices of $h$ as norm-like function fulfilling \eqref{integrability h} are possible depending on the concentration properties of $\mu$. Consider the following examples:
			\begin{enumerate}
				\item If $\mu$ has finite first moment, then $h(x):= x$ fulfills \eqref{integrability h}.
				\item If $\mu$ has finite $\rho$-th moment, $\rho>0$, then $h(x):=x^\rho$ fulfills \eqref{integrability h} due to its Hölder continuity.
				\item If $\mu$ has finite $h$-moment, i.e. $\int_0^\infty h\dd \mu <\infty$, and $h$ is eventually subadditive, then $h$ fulfills \eqref{integrability h}.
			\end{enumerate}
	\end{enumerate}
\end{remark}

\begin{example} \label{reg var}
	Let $\bm{\pi}\in\mathcal{RV}(-\rho)$ with eventually monotone density $\pi$ and choose $\mu=\text{Pareto}(x_m=0,\alpha=\rho^\prime)$ with $0<\rho^\prime<\rho$. Note, if we write $x_m=0$ we mean a Pareto distribution shifted to $0$. We will show that this setting fulfills the assumptions of Theorem \ref{sampling}: It is clear that Condition 1. is fulfilled by choice of $\mu$. It is commonly known from the theory of regular variation that a distribution that is regularly varying at infinity with index $-\rho$ has finite moments of order strictly smaller than $\rho$, c.f. \cite[Thm. 2.14]{nair-fundamentals-2022}. If $\rho>1$ we choose $\rho^\prime\in (1,\rho)$ and $h(x)=x$ and obtain \eqref{integrability h}. If $\rho\leq 1$ we set $h(x):=x^{\rho^{\prime\prime}}$ with $0<\rho^{\prime\prime}<\rho^\prime$ and obtain \eqref{integrability h} due to the Hölder continuity of $h$ and $\mu\in\mathcal{RV}(-\rho^\prime)$. Condition \eqref{tail and monotonicity} is true by choice of $\mu$ with $\rho^\prime<\rho$ and the fact that $\bm{\pi}\in\mathcal{RV}(-\rho)$ was chosen with eventually monotone density, which implies $x\pi(x)\sim c \overline{F}_{\bm{\pi}}(x)$, see \cite[Thm. 2.11]{nair-fundamentals-2022}. Here we use that for the choice of $h$ we have $h(x)/h^\prime(x)=\frac{1}{\rho^{\prime\prime}}x$ and $\overline{F}_\mu/\overline{F}_{\bm{\pi}}\in\mathcal{RV}(\rho-\rho^\prime),\rho-\rho^\prime >0$.
\end{example}

\begin{example}
	Some $\bm{\pi}\in\mathcal{H}\setminus\mathcal{RV}$ with tail decay faster than any polynomial can be chosen as target distribution: To begin, we make the observation that any such distribution must have finite first moment. To see this, we choose $\delta >0$ s.t. $\overline{F}_{\bm{\pi}}(x)\leq cx^{-2}$ for all $x\geq \delta$ and obtain via the tail sum formula
		\begin{equation*}
			\int_0^\infty x\bm{\pi}(\dd x) = \int_0^\infty \overline{F}_{\bm{\pi}}(x) \dd x \lesssim \delta + c\int_\delta^\infty x^{-2} \dd x <\infty.
		\end{equation*}
		We can therefore choose $h(x):=x$ in this case.
		\begin{enumerate}
			\item Weibull distribution: Assume that
			\begin{equation*}
				\pi(x)\sim \alpha\beta x^{\alpha-1}\exp\left(-\beta x^\alpha\right), \quad x,\beta\geq 0, \alpha\in (0,1),
			\end{equation*}
			and choose $\mu =\text{Weibull}(\alpha^\prime,\beta^\prime)$, hence $\overline{F}_\mu(x)=\exp(-\beta^\prime x^{\alpha^\prime})$. It is clear that the choice of $\mu$ as Weibull distribution with arbitrary parameters fulfills Condition 1. and \eqref{integrability h} of Theorem \ref{sampling}. Also, the choices $\alpha^\prime \in (0,\alpha)$ and any $\beta^\prime\geq 0$ imply that \eqref{tail and monotonicity} is true due to the fact that for any powers $0<\alpha^\prime<\alpha<1$ it holds that
			\begin{equation*}
				x^\alpha\exp\left(-\beta x^\alpha\right) \lesssim \exp\left(-\beta^\prime x^{\alpha^\prime}\right).
			\end{equation*}
			\item Lognormal distribution: Assume that 
			\begin{equation*}
				\pi(x) \sim \frac{1}{x\sigma_\pi\sqrt{2\pi}}\exp\left(-\frac{\left(\ln(x)-m_\pi\right)^2}{2\sigma_\pi^2}\right), \quad x,\sigma\geq 0, m\in\mathbb{R}.
			\end{equation*}
			and choose $\mu=\text{Lognormal}(m,\sigma^2)$, hence
			\begin{equation*}
				\overline{F}_\mu (x) = \int_x^\infty \frac{1}{u\sigma\sqrt{2\pi}}\exp\left(-\frac{\left(\ln(u)-m\right)^2}{2\sigma^2}\right) \dd u.
			\end{equation*}
			Condition 1. and \eqref{integrability h} of Theorem \ref{sampling} are clearly fulfilled in this case. Although the lognormal tail function is not accessible in closed form we can take advantage of \cite[Eq. 26.2.12]{abramowitz-handbook-1972} giving the infinite expansion for Gaussian integrals
			\begin{equation*}
				\int_x^\infty\exp\left(-\frac{u^2}{2}\right)\dd u = \frac{\exp\left(-\frac{x^2}{2}\right)}{x}\bigg(1-\frac{1}{x^2}+\frac{3}{x^4}+\ldots+\frac{(-1)^n3\cdots(2n-1)}{x^{2n}}\bigg) + R_n(x),
			\end{equation*}
			where $R_n$ has an integral representation and has absolute value less than the first neglected term of the series expansion. This allows to deduce via substitution $y = (\ln(u)-m)/\sigma$ in the estimate for $\overline{F}_\mu$ 
			\begin{equation*}
				\frac{\sqrt{2}\sigma}{\sqrt{\pi}(\ln(x)-m)}\exp\left(-\frac{(\ln(x)-m)^2}{2\sigma^2}\right)\leq \overline{F}_\mu (x),\quad x\in (0,\infty).
			\end{equation*}
			Thus, for the choice $\sigma^2>\sigma^2_\pi$ Assumption \eqref{tail and monotonicity} holds true since for the leading quadratic terms in the exponentials we have
			\begin{equation*}
				-\frac{1}{2\sigma_\pi^2}\ln(x)^2 \lesssim -\frac{1}{2\sigma^2}\ln(x)^2
			\end{equation*}
			and those determine the asymptotic comparison.
		\end{enumerate}
\end{example}

\begin{example}
	Some $\bm{\pi}\in\mathcal{H}\setminus\mathcal{RV}$ with tail decay slower than any polynomial can be chosen as target distribution: A common way to transform a distribution $\nu$ in order to obtain one with a heavier tail is to consider the random variable $Y=\exp(Z)$ where $Z\sim\nu$. Consider for example $Z\sim\text{Pareto}(x_m,\alpha),x_m> 0,\alpha>0$. Via substitution one sees that $Y$ has density
	\begin{equation*}
		f(x) = \frac{\alpha x_m^\alpha}{x\ln(x)^{\alpha+1}}, \quad x\geq \exp\left(x_m\right).
	\end{equation*}
	Assume that 
	\begin{equation*}
		\pi (x) \sim \frac{c}{x\ln(x)^{\alpha+1}}
	\end{equation*}
	for some $c>0$ and choose $\mu = \text{Logpareto}(x_m=0,\alpha^\prime)$ with $0<\alpha^\prime<\alpha$. By Logpareto we mean the distribution resulting from the above mentioned transformation, in this case of a shifted Pareto distribution. Then the choice $h(x):=\ln(x)^{\alpha^{\prime\prime}}$ with $0<\alpha^{\prime\prime}<\alpha^\prime$ yields a valid norm-like function due to the Lipschitz continuity of the logarithm and the Hölder continuity of $x\mapsto x^{\alpha^{\prime\prime}}$. In particular Condition 1. and \eqref{integrability h} of Theorem \ref{sampling} are fulfilled. Since 
	\begin{equation*}
		\frac{h(x)}{h^\prime (x)} = \frac{x\ln(x)^{\alpha^{\prime\prime}}}{\alpha^{\prime\prime}\ln(x)^{\alpha^{\prime\prime}-1}} = \frac{1}{\alpha^{\prime\prime}}x\ln (x)
	\end{equation*}
	it follows
	\begin{equation*}
		\frac{h(x)}{h^\prime (x)}\pi (x) \sim \frac{c}{\alpha^{\prime\prime}\ln (x)^\alpha}.
	\end{equation*}
	Finally, due to $\overline{F}_\mu (x) = \frac{x_m^{\alpha^\prime}}{\ln(x)^{\alpha^\prime}}$ one obtains \eqref{tail and monotonicity} for the choice $\alpha^\prime \leq \alpha$.
\end{example}

\subsection{Simulations}\label{Sec_simulate}

We illustrate Theorem \ref{sampling} on various examples that are obtained by implementing LLMC through the simulation of the jumps of $L$ and solving the autonomous equation \eqref{ode} in between jumps numerically. All examples are additionally plotted with a log-scale to emphasise that the samples are well distributed in the tail. We also compare all simulations to LLMC with a Lévy measure with exponential tails in order to show that convergence is significantly slower in that case. Further simulations, apart from the ones associated to these figures, revealed that even tripling the time of simulation LLMC with exponential Lévy measure still does not produce samples above $20$. This emphasises the difference in speed of convergence. We first consider the following two examples belonging to $\mathcal{S}\setminus\mathcal{RV}$. The density $f_1$ shows sine wave oscillations in $(0,10)$ and a Weibull tail. The density $f_2$ is inspired by Example $3.8$ from \cite{oechsler-levy-2024}, having discontinuities of jump-type and a lognormal tail.

\begin{example}[Oscillations and Weibull tail]
	Consider the density $f_1$ with normalising constant $c$ defined by
	\begin{align*}
		f_1:=
		\begin{cases}
			0,&\quad x \leq 0, \\
			c\exp\left(-\frac{1}{2}x\right),&\quad x\in (0,2.5], \\
			c\left(\frac{3}{2}+\sin\left(x^{\frac{3}{2}}\right)\right),&\quad x\in (2.5,10], \\
			cx^{-\frac{1}{2}}\exp\left(-x^{\frac{1}{2}}\right),&\quad x\in (10,\infty),
		\end{cases}
	\end{align*}
	with simulation results depicted in Figure 1.
\begin{figure}[ht]
	\center
	\includegraphics[scale=0.32]{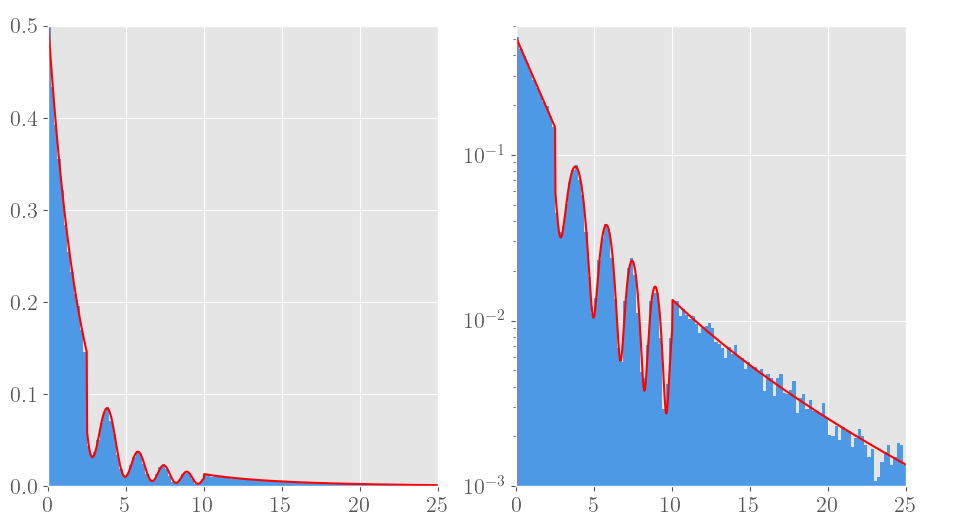}
	\includegraphics[scale=0.32]{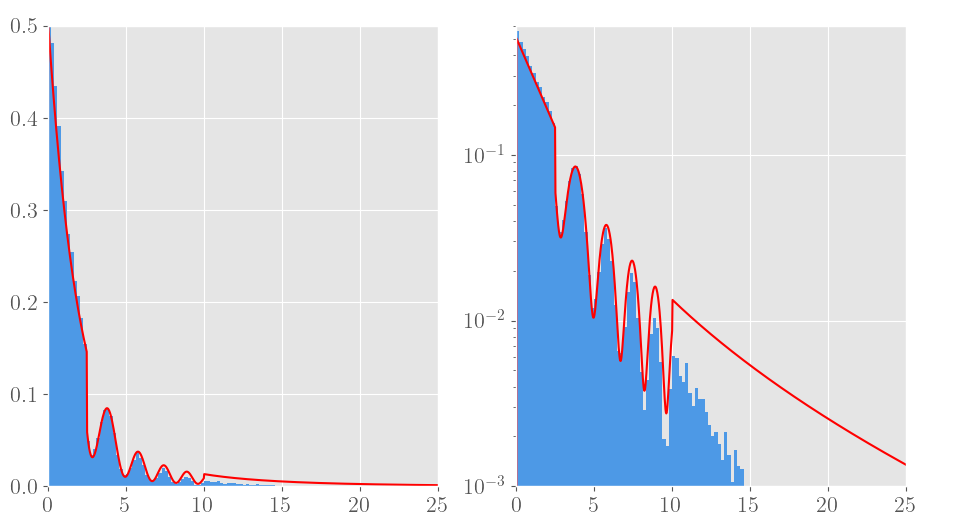}
	\caption{Density $f_1$ and histogram of $N=30.000$ samples after simulation for time $T=15$.\\
			Subfigure 1 and 2: Lévy noise with Weibull tail plotted with linear scale and log-scale. Subfigure 3 and 4: Lévy noise with exponential tail plotted with linear scale and log-scale.}
\end{figure}
\end{example}

\newpage
\begin{example}[Discontinuities and lognormal tail]
	Consider the density $f_2$ with normalising constant $c$ defined by
	\begin{align*}
		f_2:=
		\begin{cases}
			0,&\quad x \leq 0, \\
			c\exp(-\frac{1}{2}x),&\quad x\in (0,5], \\
			cx^{-2}+0.12,&\quad x\in (5,7], \\
			\frac{c}{x\sqrt{4\pi}} \exp\left(-\frac{\ln(x)^2}{4}\right),&\quad x\in (7,\infty),
		\end{cases}
	\end{align*}
	with simulation results depicted in Figure 2.
\begin{figure}[ht]
	\center
	\includegraphics[scale=0.32]{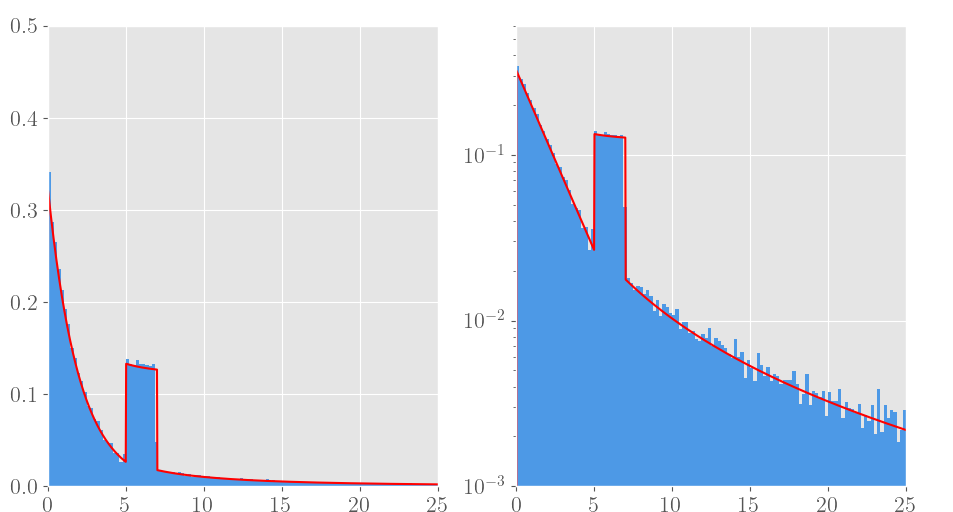}
	\includegraphics[scale=0.32]{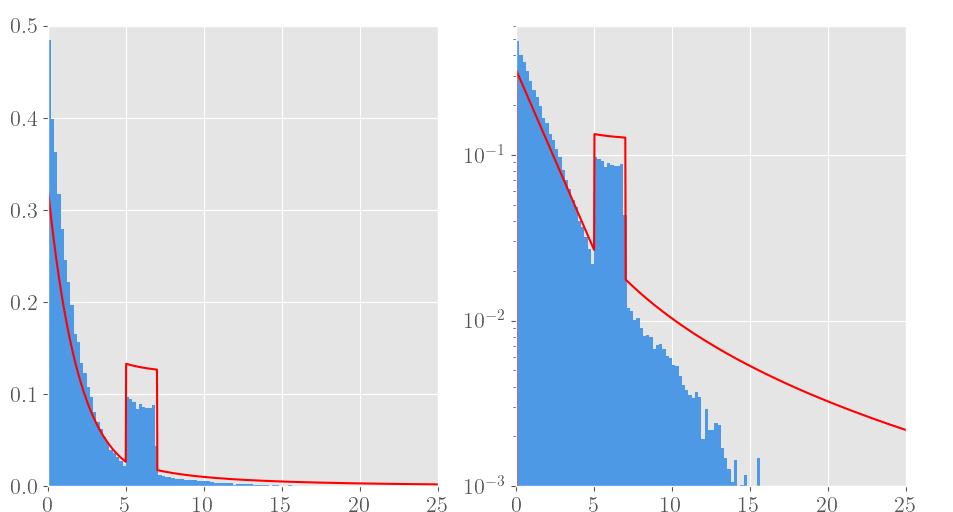}
	\caption{Density $f_2$ and histogram of $N=30.000$ samples after simulation for time $T=15$.\\
			Subfigure 1 and 2: Lévy noise with lognormal tail plotted with linear scale and log-scale. Subfigure 3 and 4: Lévy noise with exponential tail plotted with linear scale and log-scale.}
\end{figure}
\end{example}

\newpage Then, consider the following two examples from $\mathcal{RV}(-1)$. Again, $f_3$ is inspired by Example $3.8$ from \cite{oechsler-levy-2024}, having discontinuities of jump-type, here with a regularly varying tail. The density $f_4$ shows sine wave oscillations in $(0,10)$ and again a regularly varying tail. 

\begin{example}[Discontinuities and regularly varying tail] \label{example 3}
	Consider the density $f_3$ with normalising constant $c$ defined by
	\begin{align*}
		f_3 (x):=
		\begin{cases}
			0,&\quad x \leq 0, \\
			c\exp(-\frac{1}{2}x),&\quad x\in (0,5], \\
			cx^{-2}+0.12,&\quad x\in (5,7], \\
			cx^{-2},&\quad x\in (7,\infty),
		\end{cases}
	\end{align*}
	with simulation results depicted in Figure 3.
\begin{figure}[ht]
	\center
	\includegraphics[scale=0.32]{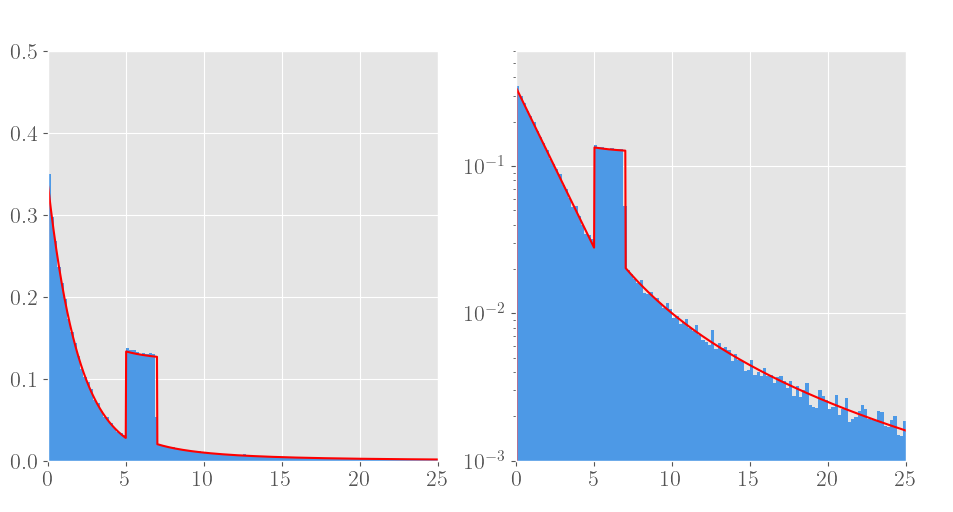}
	\includegraphics[scale=0.32]{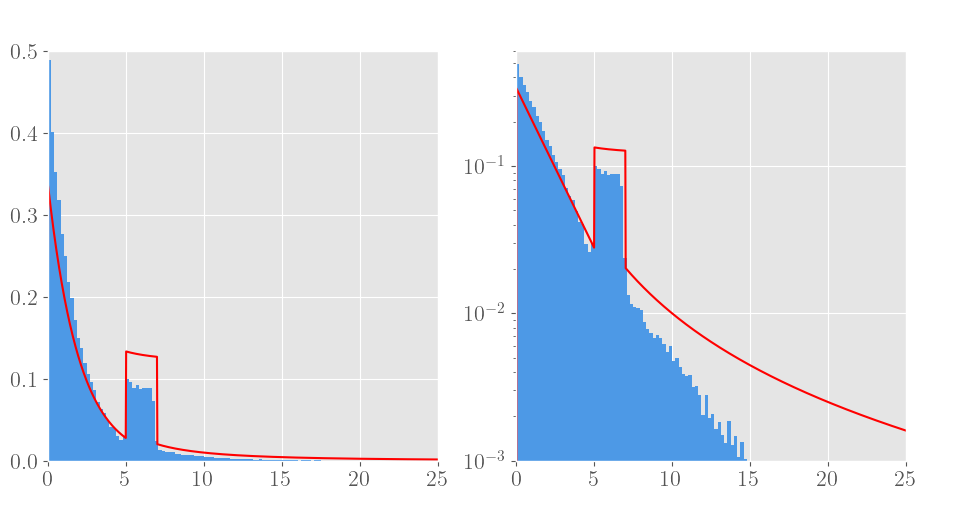}
	\caption{Density $f_3$ and histogram of $N=30.000$ samples after simulation for time $T=15$.\\
			Subfigure 1 and 2: Lévy noise with regularly varying tail plotted with linear scale and log-scale. Subfigure 3 and 4: Lévy noise with exponential tail plotted with linear scale and log-scale.}
\end{figure}
\end{example}
\newpage
\begin{example}[Oscillations and regularly varying tail] \label{example 4}
	Consider the density $f_4$ with normalising constant $c$ defined by
	\begin{align*}
		f_4(x):= 
		\begin{cases}
			0,&\quad x \leq 0, \\
			c\exp(-\frac{1}{2}x),&\quad x\in (0,2.5], \\
			c\left(\frac{3}{2}+\sin(x^{\frac{3}{2}})\right),&\quad x\in (2.5,10], \\
			cx^{-2},&\quad x\in (10,\infty),
		\end{cases}
	\end{align*}
	with simulation results depicted in Figure 4.
\begin{figure}[ht]
	\center
	\includegraphics[scale=0.32]{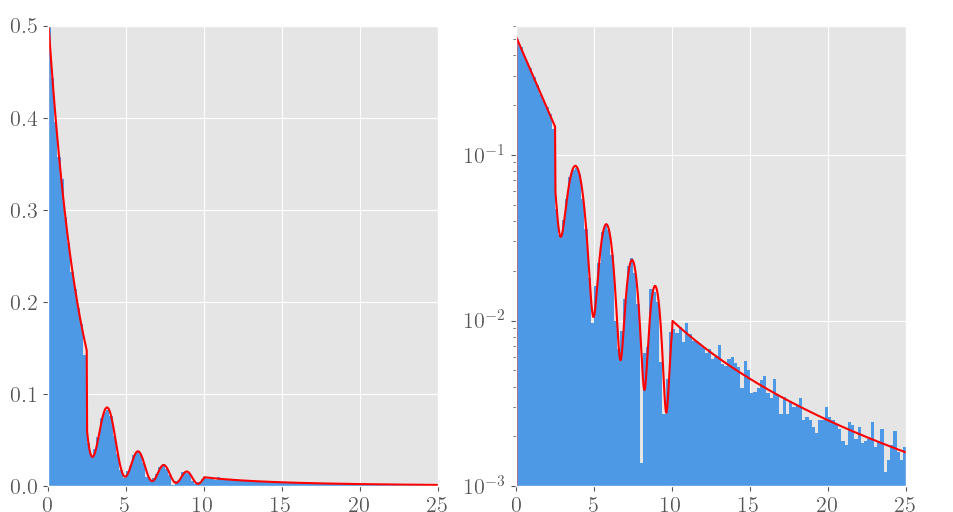}
	\includegraphics[scale=0.32]{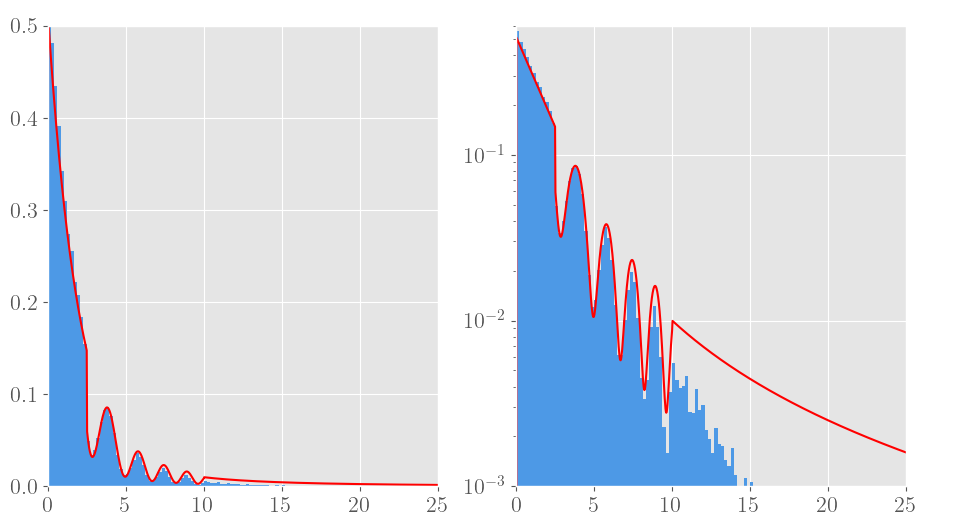}
	\caption{Density $f_4$ and histogram of $N=30.000$ samples after simulation for time $T=15$.\\
			Subfigure 1 and 2: Lévy noise with regularly varying tail plotted with linear scale and log-scale. Subfigure 3 and 4: Lévy noise with exponential tail plotted with linear scale and log-scale.}
\end{figure}
\end{example}

\section{Auxiliary results and proofs}\label{Sec_proofs}

In Section \ref{main} we have deduced the form of the drift coefficient $\phi$ that implies infinitesimal invariance of the target measure $\bm{\pi}$ for the strong solution of the SDE \eqref{levy langevin diffusion}. The strategy to deduce convergence of $(X_t)_{t\geq 0}$ to the target distribution is as follows. It is commonly known that a Markov process with a certain recurrence behaviour, which we are yet to define, has a unique invariant probability measure. In a series of three publications \cite{meyn-stability-1992,meyn-stability-1993,meyn-stability-1993-1}, Lyapunov criteria on the generator were developed in order to conclude this type of recurrence property. At this point, it is not clear whether this invariant probability measure is equal to the target measure, which is infinitesimally invariant by construction of $(X_t)_{t\geq 0}$. Similar to \cite{oechsler-levy-2024}, we are going to show that the distributional equation derived in \cite{behme-invariant-2024} has a unique solution and that, given the assumptions in Section \ref{assumptions}, any invariant measure is also infinitesimally invariant. Then we may conclude that the unique invariant probability measure is $\bm{\pi}$. The final step is to prove ergodicity, which essentially follows from showing the Lyapunov criteria for a particular type of norm-like function.

\subsection{Recurrence}

The type of recurrence we refer to in this section is  Harris recurrence, since it is known that Harris recurrence of a Markov process implies the existence of a unique invariant measure, cf. \cite[Thm. I.3]{azema-mesure-1967}. 
In order to define it, we first introduce the random variables associated to a Borel set $A$
\begin{equation*}
	\tau_A:=\inf\{t\geq 0\,\vert\, X_t\in A\}, \quad
	\eta_A:=\int_0^\infty\mathds{1}_A\left(X_t\right)\dd t.
\end{equation*}
Then a Markov process $X=(X_t)_{t\geq 0}$ on $(0,\infty)$ is \emph{Harris recurrent} if either
\begin{enumerate}
	\item for some $\sigma$-finite measure $\varphi$ it holds $\PP^x(\eta_A=\infty)=1$ if $\varphi(A)>0$, or
	\item for some $\sigma$-finite measure $\psi$ it holds $\PP^x(\tau_A<\infty)=1$ if $\psi(A)>0$.
\end{enumerate}
If the unique invariant measure of a Harris recurrent Markov process is finite, then the process is called \emph{positive Harris recurrent}.

We briefly comment on the constructions introduced in \cite{meyn-stability-1992,meyn-stability-1993,meyn-stability-1993-1} in order to make sense of the condition for positive Harris recurrence. So far, we have defined the pointwise infinitesimal generator, which is an extension of the strong infinitesimal generator, where the convergence in \eqref{pointwise generator} must hold uniformly. In aforementioned \cite{meyn-stability-1992,meyn-stability-1993,meyn-stability-1993-1}, the authors consider another extension of the strong generator, called the \emph{extended generator}, given by the pair $(\mathcal{G},\mathcal{D}(\mathcal{G}))$ with $\mathcal{G}f=g$ and characterised by the equations

\begin{equation*}
	\EE^x(f(X_t))=f(x)+\EE^x\bigg(\int_0^tg(X_s)\dd s\bigg)
\end{equation*}
and 
\begin{equation*}
	E^x\bigg(\int_0^t\vert g(X_s)\vert\dd s\bigg) <\infty,
\end{equation*}
where $\mathcal{D}(\mathcal{G})$ contains all measurable $f$ such that such a measurable function $g$ exists. Since the extended generator agrees with the pointwise infinitesimal generator for $(X_t)_{t\geq 0}$ on all functions relevant to this text we will stick to the latter notion.

Additionally, in \cite{meyn-stability-1993-1} the authors introduce a truncation of the process $X$, denoted by $X^m$ and defined by 
\begin{align*}
	X_t^m:=
		\begin{cases}
		X_t,\,\text{if}\quad t < \tau_m, \\
		\Delta_m,\,\text{if}\quad t\geq\tau_m,
		\end{cases}
\end{align*}
where $\tau_m:=\inf\{t\geq 0\,\vert\,X_t\in F_m^c\}$ is the first exit time of $F_m$ and $(F_m)_{m\in\mathbb{N}}$ is a family of open precompact sets with $F_m\uparrow (0,\infty)$, as $m\to \infty$. The truncation approach in \cite{meyn-stability-1993-1} circumvents difficulties of characterising the domain of the extended generator and the recurrence condition is then formulated w.r.t. $\mathcal{G}_m$ therein. For our purposes this construction is relevant since it implies that norm-like functions are in the domain of $\mathcal{G}_m$. In what follows we assume that this holds and notationally surpress the dependence on $m$, i.e. we just formulate our conditions w.r.t. $\mathcal{A}$ as given in Equation \eqref{pointwise generator}. 

Before, recall that a non-empty set $C\in\mathcal{B}((0,\infty))$ is called \emph{$\varphi$-petite} if there exists a probability measure $a$ on $(0,\infty)$ and a non-trivial measure $\varphi$ on $\mathcal{B}((0,\infty))$ s.t. for all $y\in C$
\begin{equation*}
\int_0^\infty \PP^y(X_t\in B) a(\dd t) \geq \varphi (B)
\end{equation*}
for all $B\in\mathcal{B}((0,\infty))$.

According to \cite[Thm. 4.2]{meyn-stability-1993-1}, the following condition on the pointwise generator allows to conclude that the associated Markov process is positive Harris recurrent.

\begin{condition}[Positive Harris recurrence] \label{recurrence condition}
There exist constants $c,d>0$, $g\geq 1$, a petite set $C$ and a norm-like function $h\geq 0$ that is bounded on $C$, such that
\begin{equation*}
\mathcal{A}h(x)\leq -cg(x) + d\mathds{1}_C(x), \quad x\in(0,\infty).
\end{equation*}
\end{condition}

The strategy to obtain the upper bound in Condition \ref{recurrence condition} for the generator in Equation \eqref{pointwise generator} is the following. Recall that for $(X_t)_{t\geq 0}$ solving \eqref{langevin diffusion} under the assumptions in Section \ref{assumptions} for $h\in C_c^\infty((0,\infty))$ the generator writes 
\begin{equation*}
	\mathcal{A}h(x) = \phi(x)h^\prime(x) + \int_0^\infty \big( h(x+z)-h(x) \big)\mu(\dd z).
\end{equation*}
We will choose a smooth norm-like function and obtain the same equation in that case. Note that this is well defined despite the fact that a norm-like function is not in $C_c^\infty((0,\infty))$ due to the aforementioned truncation procedure that we surpress in the notation. The goal will be to choose the norm-like function s.t. the integral term is bounded uniformly in $x$. This will be dealt with by the term $d\mathds{1}_C$ for a compact set $C$. It will then be the role of the drift term to achieve the bound $-cg(x)$ as $x\to\infty$. Note that our state space $(0,\infty)$ only needs asymptotic considerations towards $\infty$ and the behaviour of $\mathcal{A}f$ on compact sets of the form $[0,c]$ can be dealt with by $d\mathds{1}_C$ in Condition \ref{recurrence condition}. We will already formulate the proof for a particular choice of norm-like function that ensures $g=h$. The reason will become clear in Section \ref{ergodicity} below. But first, we show that all compact sets are petite.

\begin{lemma}[Compact sets are petite] \label{compacts petite}
Let the assumptions in Section \ref{assumptions} hold. Then  all compact sets are petite for $X=(X_t)_{t\geq 0}$ solving \eqref{langevin diffusion}.
\end{lemma}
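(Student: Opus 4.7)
Fix an arbitrary compact $C \subset (0,\infty)$. I will show $C$ is petite by taking the sampling measure in the petite definition to be $a=\delta_{t_0}$ for a suitable $t_0>0$, so it suffices to exhibit a non-trivial measure $\varphi$ on $\mathcal{B}((0,\infty))$ with $\PP^y(X_{t_0}\in B)\geq\varphi(B)$ for every $y\in C$ and $B\in\mathcal{B}((0,\infty))$. The construction exploits the piecewise-deterministic-Markov structure of $X$: between the rate-$1$ Poisson jump times $T_1<T_2<\cdots$, the process follows the deterministic flow $u(\cdot,\cdot)$ from \eqref{ode}, while at $T_k$ it performs an independent jump of size $\xi_k\sim\mu$.

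\textbf{Key estimate.} Restricting to the event $\{N_{t_0}=1\}$, on which $T_1$ has joint density $e^{-t_0}$ on $(0,t_0)$ and $\xi_1\sim\mu$ is independent, and noting that $X_{t_0}=\Psi_y(T_1,\xi_1)$ with $\Psi_y(s,z):=u(u(y,s)+z,\,t_0-s)$, I get the lower bound
\begin{equation*}
\PP^y(X_{t_0}\in B)\;\geq\; e^{-t_0}\int_0^{t_0}\!\!\int_0^\infty\mathds{1}_B\bigl(\Psi_y(s,z)\bigr)\,\mu(\dd z)\,\dd s.
\end{equation*}
For fixed $y\in C$ and $s\in(0,t_0)$ the map $z\mapsto\Psi_y(s,z)$ is a strictly increasing bijection from $(0,\infty)$ onto $(u(y,t_0),\infty)$, with Jacobian
\begin{equation*}
\partial_z\Psi_y(s,z)\;=\;\exp\Bigl(\int_0^{t_0-s}\phi'\bigl(u(u(y,s)+z,r)\bigr)\,\dd r\Bigr)
\end{equation*}
wherever $\phi'$ exists. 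Since $\phi$ is piecewise Lipschitz and the argument $u(u(y,s)+z,r)$ stays in a compact subset of $(0,\infty)$ as $(y,s,z,r)$ vary in $C\times[0,t_0]\times K\times[0,t_0]$ for any compact $K\subset(0,\infty)$, this Jacobian is bounded above and away from $0$, uniformly in these variables. Choosing a target interval $J=[\beta_1,\beta_2]$ with $\beta_1>\max(C)$ (so that $\beta_1>u(y,t_0)$ for all $y\in C$, using that $\phi\leq 0$), the preimage $\Psi_y^{-1}(s,J)$ is uniformly contained in a compact subset of $(0,\infty)$, and the change of variables $w=\Psi_y(s,z)$ then delivers $\PP^y(X_{t_0}\in B)\geq\kappa\,\nu(B\cap J)$ for some $\kappa>0$ and a non-trivial measure $\nu$ on $J$ arising as the pushforward of $\mu$; one may then take $\varphi:=\kappa\,\nu(\cdot\cap J)$. (When $\mu$ has a density that is bounded below on a compact subset of $(0,\infty)$, as in the settings of Theorems~\ref{ergodicity subexponential} and \ref{ergodicity regular variation}, $\nu$ can be replaced by Lebesgue measure on $J$.)

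\textbf{Main obstacle.} The principal technical difficulty is the limited regularity of $\phi$: since $\pi$ is only piecewise Lipschitz, $\phi$ is itself only piecewise Lipschitz and $\phi'$ exists merely almost everywhere, so the Jacobian formula above must be read in the monotone/Rademacher sense. However, on each smooth piece the flow $u(\cdot,\tau)$ is $C^1$ in the initial datum, and globally it remains a monotone absolutely continuous bijection, so the change of variables and the two-sided Jacobian bound go through by a standard monotone-substitution argument. The partition assumption in Section \ref{assumptions}, that the $(x_i)_{i\in\mathbb{Z}}$ have $0$ as their only accumulation point, ensures that only finitely many non-smoothness points of $\phi$ lie in the compact set of relevant flow positions, so uniformity in $y\in C$ is preserved and the argument closes.
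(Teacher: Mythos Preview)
Your route is genuinely different from the paper's. The paper conditions on the \emph{no-jump} event $\{N_t=0\}$, takes the absolutely continuous sampling measure $a(\dd t)=e^{-t}\dd t$, and performs the change of variables in the \emph{time} variable via the strictly decreasing flow $t\mapsto u_y(t)$; this produces a Lebesgue minorant on a fixed small interval and uses nothing about $\mu$ whatsoever. You instead condition on $\{N_{t_0}=1\}$, use the Dirac sampling measure $\delta_{t_0}$, and change variables in the \emph{jump size} $z$. The paper's choice is both simpler (one only needs the flow speed $|\phi|$ to be locally bounded, not the derivative of the flow in the initial datum) and strictly more general, since it applies to an arbitrary jump distribution $\mu$, whereas your change of variables needs $\mu$ to have a density bounded below on compacts---a hypothesis absent from Section~\ref{assumptions}, as you yourself note parenthetically. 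Without that hypothesis the ``pushforward of $\mu$'' you call $\nu$ depends on $(y,s)$ and there is no single minorising measure.

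There is also a concrete error in the sketch. Your claim that $z\mapsto\Psi_y(s,z)$ maps $(0,\infty)$ onto $(u(y,t_0),\infty)$ fails whenever $|\phi|$ grows super-linearly at infinity: if $\phi(x)\sim -cx^\gamma$ with $\gamma>1$, then $\sup_{x_0>0} u(x_0,\tau)<\infty$ for every $\tau>0$, so the range of $\Psi_y(s,\cdot)$ is a bounded interval. This is precisely what happens in the setting of Theorem~\ref{ergodicity regular variation}, where the asymptotics in the proof of Theorem~\ref{recurrence reg var half line} give $\phi(x)\sim -\rho^{-1}L(x)\,x^{\rho-\rho'+1}$ with $\rho-\rho'+1>1$. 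For such $(y,s)$ your interval $J$ with $\beta_1>\max(C)$ need not lie in the range, the preimage is empty, and the lower bound is vacuous. The repair is to restrict to $s\in(t_0-\varepsilon,t_0)$ so that the post-jump flow time is short and the range of $\Psi_y(s,\cdot)$ is guaranteed to contain $J$; together with the density assumption on $\mu$ this makes your argument go through, but as written it does not.
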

\begin{proof}
This can be proven with a slight adaptation of the derivation in \cite[Lemma 4.2]{oechsler-levy-2024}. The strategy is to construct the measure $\varphi$ from all deterministic trajectories starting from a compact set $C$. W.l.o.g. we assume that $C=[k_1,k_2]$. Let $u_x$ be the solution to the autonomous differential equation
\begin{align} \label{ode on R}
\begin{cases}
u^\prime_x(t) = \phi\big(u_x(t)\big),\quad t\geq 0, \\
u_x(0)=x.
\end{cases}
\end{align}
Here, $^\prime$ denotes the differentiation w.r.t. the variable $t$ and the initial value is written in the subscript for clearer computations. Restricting on the event of no jumps up to time $t$, i.e. on $\{N_t=0\}$, allows for the bound
\begin{equation} \label{nojump}
\PP^x(X_t\in B) \geq \mathds{1}_B (u_x(t)) \PP(N_t=0) = \mathds{1}_B (u_x(t)) \exp(-t),
\end{equation}
for all $B\in\mathcal{B}((0,\infty))$.
Let $\delta\in (0,k_1)$. In the derivation of the measure $\varphi$ we need the quantity
\begin{equation*}
	D:=\sup_{x\in C}\sup_{t\in [u_x^{-1}(k_1),u_x^{-1}(k_1-\delta)]}\abs{u_x^\prime(t)} 
\end{equation*}
to be finite, where $u_x^{-1}(y)$ is the time that the solution starting in $x$ needs to arrive in $y$. First, we observe that the uniqueness and monotonicity of the solution to \eqref{ode on R} imply that $D$ can be realised by
\begin{equation*}
	D = \sup_{t\in [0,u_{k_1}^{-1}(k_1-\delta)]}\abs{u_{k_1}^\prime(t)}.
\end{equation*}
Second, by construction we have that $u_{k_1}([0,u_{k_1}^{-1}(k_1-\delta)])=[k_1-\delta,k_1]\subset(0,\infty)$. Further, $\phi$ is locally bounded on compact subsets of $(0,\infty)$. This is due to the local boundedness on compact subsets of $(0,\infty)$ of $\pi$ from below and of $\overline{F}_\mu\ast\pi\leq 1$ from above. Hence $D<\infty$ because of $u_{k_1}^\prime(t)=\phi(u_{k_1}(t))$. We set $a(\dd t):=\exp(-t)\dd t$ and obtain by restriction to the set $\{N_t=0\}$ on $(\Omega,\mathcal{F})$, with $(N_t)_{t\geq 0}$ as in \eqref{representation compound poisson}, and the $x$-dependent set $[u_x^{-1}(k_1),u_x^{-1}(k_1-\delta)]$ on the time axis for all $B\in\mathcal{B}((0,\infty))$ via \eqref{nojump}
\begin{align*}
\int_0^\infty \PP^x(X_t\in B) a(\dd t) 
&\geq \int_0^\infty \mathds{1}_B(u_x(t))\exp(-2t) \dd t \\
&\geq \int_{u_x^{-1}(k_1)}^{u_x^{-1}(k_1-\delta)} \mathds{1}_B(u_x(t))\exp(-2t) \dd t \\
&\geq \exp(-2u_{k_2}^{-1}(k_1-\delta)) \int_{u_x^{-1}(k_1)}^{u_x^{-1}(k_1-\delta)} \mathds{1}_B(u_x(t)) \dd t \\
&= \exp(-2u_{k_2}^{-1}(k_1-\delta)) \int_{u_x^{-1}(k_1)}^{u_x^{-1}(k_1-\delta)} \mathds{1}_B(u_x(t)) \frac{u^\prime_x(t)}{u^\prime_x(t)} \dd t \\
&\geq \frac{\exp(-2u_{k_2}^{-1}(k_1-\delta))}{D} \int_{u_x^{-1}(k_1)}^{u_x^{-1}(k_1-\delta)} \mathds{1}_B(u_x(t)) u^\prime_x(t) \dd t \\
&= \frac{\exp(-2u_{k_2}^{-1}(k_1-\delta))}{D} \int_{k_1-\delta}^{k_1} \mathds{1}_B(s) \dd s=:\varphi(B),
\end{align*}
a non-trivial measure due to finiteness of $D$.
\end{proof}

\begin{theorem}[Positive Harris recurrence]
	Let the assumptions of Section \ref{assumptions} hold. Further, assume that the Lévy measure $\mu$ has unbounded support and the distribution function $F_\mu$ fulfills Condition 2. of Theorem \ref{sampling}. Then, the strong solution of \eqref{levy langevin diffusion} with $\phi$ as in \eqref{drift} and $L$ with Lévy measure $\mu$ is positive Harris recurrent.
\end{theorem}
\begin{proof}
Let $h$ be as in Condition 2. of Theorem \ref{sampling}. Recall the form of the generator $\mathcal{A}$ in \eqref{Levy-type operator}, then for the numerator of the drift specified in \eqref{drift} term we observe that $\overline{F}_\mu(x)=1$ for all $x\leq 0$ and therefore
\begin{align*}
	\overline{F}_\mu\ast \pi(x) &= \int_0^x\overline{F}_\mu (x-u) \pi(u) \dd u \\
		&= \int_0^\infty\overline{F}_\mu (x-u) \pi(u) \dd u - \int_x^\infty \pi(u) \dd u \\
		&= \overline{F_\mu\ast F_{\bm{\pi}}}(x)-\overline{F}_{\bm{\pi}}(x).
\end{align*}
From this we obtain
\begin{align*}
	\mathcal{A}h(x) &= -h^\prime(x)\frac{\overline{F}_\mu\ast\pi(x)}{\pi(x)} + \int_0^\infty (h(x+z)-h(x)) \mu(\dd z) \\
	&\leq -h^\prime(x)\frac{\overline{F}_\mu\ast\pi(x)}{\pi(x)} + C \\
	&= - h(x)\frac{\overline{F_{\bm{\pi}}\ast F_\mu}(x)-\overline{F}_{\bm{\pi}}(x)}{\frac{h(x)}{h^\prime(x)}\pi(x)} + C,
\end{align*}
by Condition 2. \eqref{integrability h} of Theorem \ref{sampling}. Next, we decompose the denominator into
\begin{equation*}
	-\left(\overline{F_{\bm{\pi}}\ast F_\mu}(x)-\overline{F}_{\bm{\pi}}(x)\right)= -\left(\overline{F}_\mu(x)+\Delta(x)\right),
\end{equation*}
where the associated error is
\begin{align}
	\Delta(x) &:= \overline{F_{\bm{\pi}}\ast F_\mu}(x)-(\overline{F}_{\bm{\pi}}(x)+\overline{F}_\mu(x)) \nonumber \\
	&= \int_0^x \left(\overline{F}_{\bm{\pi}}(x-y)-\overline{F}_{\bm{\pi}}(x)\right)dF_\mu(y) - \overline{F}_{\bm{\pi}}(x)\overline{F}_\mu(x).\label{decomp}
\end{align}
The first term in \eqref{decomp} has positive sign due to $\overline{F}_{\bm{\pi}}(x-y)\geq\overline{F}_{\bm{\pi}}(x)$ for all $y\in [0,x]$ and can therefore be neglected. On the other hand, $\overline{F}_{\bm{\pi}}(x)\overline{F}_\mu(x)\geq 0$ and we obtain for all $x\in (0,\infty)$ 
\begin{equation*}
	-\left(\overline{F_{\bm{\pi}}\ast F_\mu}(x)-\overline{F}_{\bm{\pi}}(x)\right)\leq -\overline{F}_\mu(x)\left(1-\overline{F}_{\bm{\pi}}(x)\right)=-\overline{F}_\mu(x)F_{\bm{\pi}}(x),
\end{equation*}
which implies that
\begin{equation*}
	\mathcal{A}h(x) \leq -h(x)\frac{\overline{F}_\mu(x)F_{\bm{\pi}}(x)}{\frac{h(x)}{h^\prime(x)}\pi(x)} + C,
\end{equation*}
for all $x\in (0,\infty)$. Because of the Condition 2. \eqref{tail and monotonicity} of Theorem \ref{sampling} and $F_{\bm{\pi}}(x)\uparrow 1$ we finally know that there exists $c>0$ and $x_0\in(0,\infty)$ s.t. for all $x\geq x_0$
\begin{equation*}
	\mathcal{A}h(x) \leq - ch(x) + C.
\end{equation*}
Since $\lim_{x\to\infty}h(x)=\infty$ we know that we can choose $\tilde{x}_0\geq x_0$ and $\tilde{c}\in (0,c)$ to obtain
\begin{equation*}
	\mathcal{A}h(x) \leq - \tilde{c}h(x) + \tilde{C}\mathds{1}_{(0,\tilde{x}_0)}(x)
\end{equation*}
for all $x\in(0,\infty)$ with $\tilde{C}:=C+\max_{x\in (0,\tilde{x}_0)}\abs{\mathcal{A}h(x)}<\infty$ due to the local boundedness from above of $h^\prime$ and $\phi$. Indeed, for $x\ll 1$ we have $\phi(x)\leq cx$ due to the assumption $\int_0^x\pi(z)\dd z\leq cx\pi(x)$ and \eqref{drift}. To conclude, Condition \ref{recurrence condition} is fulfilled and positive Harris recurrence follows.
\end{proof}

\subsection{Unique invariant measure is target measure} \label{uniqueness}

As mentioned before, positive Harris recurrence implies the existence of a unique invariant probability distribution $\nu$ for $X$. We now show that this invariant measure is indeed the target measure we aimed for, i.e. we prove that $\nu=\bm{\pi}$. Before stating and proving the main result of this section, we need to introduce some notations and preparatory results concerning Schwarz distributions.

We denote by $L^1_{\mathrm{loc}}(\nu)$ the space of locally integrable functions w.r.t. $\nu$ and by $W^{k,p}_{\mathrm{loc}}(\nu)$ the local Sobolev space w.r.t. $\nu$, where $\nu$ is a measure on $(0,\infty)$.
For $\Omega\subseteq (0,\infty)$ open we define $\mathcal{D}(\Omega):=\mathcal{C}_c^\infty((0,\infty))$ and write \(\cD'(\Omega)\) for the space of all \emph{(Schwarz) distributions} \(T:\cD(\Omega) \to \RR, f\mapsto T(f)=\langle T, f\rangle \), i.e. the space of all linear functionals that are continuous w.r.t. uniform convergence on compact subsets of all derivatives.  \\
It is known that \(T\in\cD'(\Omega)\) if for all compact sets \(K\subset \Omega\) there exist a constant \(C_K>0\) and a number \(N_K\in\NN\) such that for all functions \(f\in \cC_c^\infty(\Omega)\) with \(\supp f\subset K\) it holds
\begin{equation}\label{def_distr}
	|\langle T, f\rangle|\leq C_K\max\{\|\partial^\alpha f\|_\infty: |\alpha|\leq N_K\}.
\end{equation}
This allows to define the \emph{order} \(N_T\in\NN\cup\{\infty\}\) of \(T\) as 
\begin{equation*}
	N_T:=\inf\{n\in\NN: \eqref{def_distr} \text{ holds for all } K\subset \Omega \text{ compact with } N_K=n\},
\end{equation*} 
and we write \(\cD'^k(\Omega)\) for the space of distributions of order $k$. Distributions of order $0$ can be represented by Radon measures: By the Riesz representation theorem, c.f. \cite[Thm. 4.11]{brezis-functional-2011}, for every distribution \(T\in\cD'^0(\Omega)\) there exists a real-valued Radon measure \(m\) on \((\Omega,\cB(\Omega))\) such that
\begin{equation}\label{eq_radon}
	\langle T, f\rangle = \int_{\Omega} f(x)m(\diff x)
\end{equation}
for all \(f\in \cC^0_c(\Omega)\). Vice versa, every real-valued Radon measure on \((\Omega,\cB(\Omega))\) defines a distribution in \(\cD'^0(\Omega)\) via the mapping \eqref{eq_radon}.\\
Further, a distribution \(T\in\cD'(\Omega)\) is called \emph{regular} if there exists a function $g\in L^1_{\text{loc}}(\dd x)$ such that
\begin{equation*}
	\langle T, f\rangle =\int_0^\infty f(x)g(x)\diff x
\end{equation*} 
for all \(f\in\cC^\infty_c(\Omega)\). From the above it is clear that all regular distributions are of order $0$. Since we can identify these types of distributions with their associated Radon-measure or locally integrable functions, we will sometimes abuse notation and use the same symbol for the two respective objects. We refer to \cite{brezis-functional-2011} and \cite{duistermaat-distributions-2010} for references of the above and further background information.

The following lemma collects useful statements about distributions. We provide this here without proof and only mention that the first item originates from \cite[Lem. 2.1]{behme-invariant-2024}, the second can be found in \cite[Thm. 3.1.4]{hoermander-analysis-2003} and the third is an immediate consequence of the definition of functions of bounded variation as distributional primitives of Radon measures, c.f. \cite[Def. 5.1.1]{ziemer-weakly-1989}.

\begin{lemma}[Regularity properties] \label{distributional regularity}
Let $\nu$ be a real-valued Radon measure on $(0,\infty)$.
\begin{enumerate}
\item If $g\in L^1_{\mathrm{loc}}(\nu)$, then $g\nu\in\mathcal{D}'^0 ((0,\infty))$.
\item If $\nu\in\mathcal{D}^\prime ((0,\infty))$ is s.t. $\langle\nu,f^\prime\rangle=0$ for all $f\in\mathcal{C}_c^\infty((0,\infty))$, i.e. $\nu$ has $0$ derivative, then $\nu$ is a constant in the sense that it is regular with associated function $g\equiv c\in\mathbb{R}$.
\item If $\nu\in\mathcal{D}'^0 ((0,\infty))$, then its distributional antiderivative is regular and unique up to a constant.
\end{enumerate}
\end{lemma}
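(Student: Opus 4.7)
The plan tackles the four items in turn with tools from elementary distribution theory. For $(1)$, for a test function $f\in\cC_c^\infty((0,\infty))$ supported in some compact $K$, I would define $\langle g\nu, f\rangle := \int fg\,\dd\nu$ and estimate $|\langle g\nu, f\rangle|\leq\|f\|_\infty\int_K|g|\,\dd|\nu|$; the right-hand side is finite by $g\in L^1_{\mathrm{loc}}(\nu)$, so \eqref{def_distr} holds with $N_K=0$. For $(2)$, since $\partial\nu$ is not a priori a measure, I would take the product rule as the \emph{definition} and set $g\partial\nu := \partial(g\nu) - (\partial g)\nu$. Under $g\in W^{1,1}_{\mathrm{loc}}(\nu)$ both $g\nu$ and $(\partial g)\nu$ are order-zero distributions by $(1)$, and the distributional derivative $\partial(g\nu)$ of an order-zero object is of order at most one; hence $g\partial\nu\in\cD'^1((0,\infty))$ and the product rule is built into the definition.

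For $(3)$, I would invoke the standard device from distribution theory: fix $\psi\in\cC_c^\infty((0,\infty))$ with $\int\psi\,\dd x = 1$ and decompose any $f\in\cC_c^\infty((0,\infty))$ as $f = \bigl(\int f\,\dd x\bigr)\psi + h'$, where $h(x) := \int_0^x \bigl(f(t)-\bigl(\int f\,\dd s\bigr)\psi(t)\bigr)\,\dd t$ has compact support because the bracketed function integrates to zero. Applying $\nu$ and exploiting the hypothesis $\langle\nu, h'\rangle = 0$ then gives $\langle\nu, f\rangle = \bigl(\int f\,\dd x\bigr) c$ with $c := \langle\nu, \psi\rangle$, which identifies $\nu$ with the constant function $c$ on $\cC_c^\infty((0,\infty))$.

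For $(4)$, the Riesz representation theorem lets me view $\nu$ as a signed Radon measure $m$. I would fix $a\in(0,\infty)$ and define $g(x):=m((a,x])$ for $x>a$ and $g(x):=-m((x,a])$ for $x\leq a$; this $g$ is of locally bounded variation, hence in $L^1_{\mathrm{loc}}((0,\infty))$. A Fubini exchange then yields $-\int_0^\infty g(x) f'(x)\,\dd x = \int_0^\infty f(x)\,\dd m(x)$ for every $f\in\cC_c^\infty((0,\infty))$, i.e.\ $g' = \nu$ in the distributional sense. Uniqueness up to an additive constant follows immediately from $(3)$, since two antiderivatives differ by a distribution with vanishing derivative.

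The main obstacle I expect is the bookkeeping in $(2)$: one must verify that the Sobolev-type assumption on $g$ relative to $\nu$ rather than to Lebesgue measure genuinely makes $(\partial g)\nu$ well-defined as an order-zero distribution in the sense of $(1)$, so that the asymmetric definition $g\partial\nu := \partial(g\nu) - (\partial g)\nu$ does not depend on any ambient smoothness. The Fubini exchange in $(4)$ also requires some care near the endpoints of $\supp f$, although compact support and local finiteness of $m$ make this routine.
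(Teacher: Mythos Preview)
Your proposal is correct. The paper actually does \emph{not} prove this lemma: immediately before it, the authors write that they ``provide this here without proof and only mention that the first two items originate from \cite[Lem.~2.1]{behme_invariant_2024}, while the third and fourth translate common relations between derivatives and antiderivatives to their distributional counterpart.'' So you have supplied what the paper deliberately omits.

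Your arguments are the standard ones. Item~(1) is the usual verification of \eqref{def_distr} with $N_K=0$. For~(2), taking the product rule as the \emph{definition} of $g\partial\nu$ is exactly the right move here, since $\partial\nu$ is only a distribution of order~$1$ and multiplication by a merely $W^{1,1}_{\mathrm{loc}}$ function has no independent meaning; your observation that $g,\partial g\in L^1_{\mathrm{loc}}(\nu)$ makes both $g\nu$ and $(\partial g)\nu$ order-zero via~(1), so the difference $\partial(g\nu)-(\partial g)\nu$ is well defined and of order at most one, is precisely what is needed. Item~(3) is the textbook decomposition $f=(\int f)\psi+h'$, and~(4) is the distribution-function construction followed by Fubini, with uniqueness reduced to~(3). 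The concerns you flag at the end (the $\nu$-relative Sobolev condition in~(2) and the Fubini exchange in~(4)) are the right ones to watch, and your sketch already indicates how they are resolved.
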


We are now in the position to prove the main result of this section that allows to conclude, with help of Lemma \ref{approximation}, that the unique invariant measure of the Markov process $X$ is actually given by the target distribution $\bm{\pi}$.

\begin{theorem}[Invariant measures are infinitesimally invariant and unique] \label{uniqueness invariant measure}
Let the assumptions of Section \ref{assumptions} hold. Further, assume that $\mu$ is chosen s.t. $\inf\supp(\mu) >0$. Then any invariant measure for the strong solution $(X_t)_{t\geq 0}$ of \eqref{levy langevin diffusion} is infinitesimally invariant and $\bm{\pi}$ is the unique invariant measure.
\end{theorem}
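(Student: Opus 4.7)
The plan is to establish the two claims in turn: first, that every invariant probability measure $\nu$ is infinitesimally invariant and automatically absolutely continuous; second, that the resulting distributional equation from Theorem \ref{distributional equation} admits $\bm{\pi}$ as its unique probability-normalised solution.

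For the first step I would fix $f \in C_c^\infty((0,\infty)) \subseteq \mathcal{D}(\mathcal{A})$ and use boundedness of $\mathcal{A}f$ (the drift part is supported on $\supp f$ where $\phi$ is locally bounded, and the jump part is bounded by $2\|f\|_\infty$ since $\mu$ is a probability measure) together with $P_t f - f = \int_0^t P_s \mathcal{A}f\,ds$ to pass the limit $t\downarrow 0$ inside the stationarity identity $\int (P_t f - f)/t\,d\nu = 0$ by dominated convergence, obtaining $\int \mathcal{A}f\,d\nu = 0$. Reformulating this distributionally --- Fubini turns the jump term into $\langle \mu*\nu - \nu, f\rangle$ while integration by parts in the drift term produces $-\langle(\phi\nu)', f\rangle$ --- yields the distributional identity
\begin{equation*}
(\phi\nu)' = \mu * \nu - \nu
\end{equation*}
on $(0,\infty)$. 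The right-hand side is a signed Radon measure and so an order-zero distribution, hence by Lemma \ref{distributional regularity}(4) its antiderivative $\phi\nu$ is regular. Since $\phi$ is locally bounded and strictly negative on $(0,\infty)$ (as $\overline{F}_\mu*\pi > 0$ and $\pi>0$ with $\pi$ bounded below on compacts), dividing shows $\nu$ is absolutely continuous, and its density --- which we continue to call $\nu$ --- is eligible for Theorem \ref{distributional equation}.

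For the second step the density satisfies $\phi\nu + \overline{F}_\mu*\nu = 0$; combining with the same identity for $\pi$ and using $\phi = -(\overline{F}_\mu*\pi)/\pi$, the ratio $h := \nu/\pi$ obeys
\begin{equation*}
\int_0^x \overline{F}_\mu(x-y)\bigl(h(x) - h(y)\bigr)\pi(y)\,dy = 0, \qquad x>0.
\end{equation*}
Set $m := \inf\supp(\mu) > 0$. On $(0,m)$ the kernel equals $1$, reducing this to $h(x)\int_0^x \pi = \int_0^x h\pi$, and differentiating forces $h$ to be constant, say $h\equiv c$. I would then induct on the intervals $(km,(k+1)m)$: assuming $h\equiv c$ on $(0,km)$, splitting the integral at $km$ and using $\overline{F}_\mu(x-y)=1$ on the second piece (since $x-y<m$ there) produces the Volterra equation
\begin{equation*}
\bigl(h(x)-c\bigr)(\overline{F}_\mu*\pi)(x) = \int_{km}^x \bigl(h(y)-c\bigr)\pi(y)\,dy
\end{equation*}
with vanishing datum at $km$. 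Local boundedness of $\pi$ and a positive lower bound for $(\overline{F}_\mu*\pi)$ on compact subintervals then let Gronwall's inequality force $h\equiv c$ on the next interval, completing the induction. Probability normalisation pins $c=1$, so $\nu = \bm{\pi}$.

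The hard part will be extracting absolute continuity of $\nu$ from the abstract identity $\int \mathcal{A}f\,d\nu = 0$; this is precisely where Lemma \ref{distributional regularity} and the strict negativity of $\phi$ intervene, and without them one would need a heavier smoothing argument. The Volterra step is technically routine once absolute continuity is secured, and it is the assumption $\inf\supp(\mu) > 0$ that makes the iterative extension possible, by reducing each inductive step to an integral equation with a trivial initial datum.
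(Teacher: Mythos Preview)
Your proposal is correct and follows the same two-step architecture as the paper: first, dominated convergence on $t^{-1}(P_tf-f)$ to pass from invariance to infinitesimal invariance, then distributional regularity (Lemma \ref{distributional regularity}) to extract absolute continuity of $\nu$, and finally an iterative uniqueness argument on successive intervals of length $m=\inf\supp(\mu)$.

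Two small differences are worth noting. First, once you have absolute continuity you invoke Theorem \ref{distributional equation} directly to get $\phi\nu+\overline{F}_\mu\ast\nu=0$, whereas the paper re-derives the equation with an a priori constant $c$ on the right and then argues $c=0$ via the no-drift-to-zero assumption $\int_0^x\pi\leq cx\pi(x)$; your shortcut is legitimate since Theorem \ref{distributional equation} already encodes that argument. Second, for the uniqueness step the paper works with $\nu$ itself, rewriting the equation on each interval as an ODE and appealing to Carath\'eodory's existence theorem, while you pass to the ratio $h=\nu/\pi$ and obtain a homogeneous Volterra equation handled by Gronwall. These are equivalent packagings of the same fixed-point argument; your formulation is arguably cleaner because the inhomogeneity coming from the previously determined piece of $\nu$ is absorbed into the known constant $c$, leaving a Volterra equation with zero datum. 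One technical point to make explicit when you write this up: the Volterra identity $(h(x)-c)(\overline{F}_\mu\ast\pi)(x)=\int_{km}^x(h(y)-c)\pi(y)\,dy$ shows a posteriori that $h-c$ is continuous (the right side is absolutely continuous, the prefactor is continuous and positive), which is what licenses Gronwall.
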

\begin{proof}
The first part of the statement is a consequence of the assumption of local boundedness from below of $\pi$. Indeed, for $\varphi\in C_c^\infty((0,\infty))$, Itô's formula, cf. \cite[Thm. 20.7]{kallenberg-foundations-2021}, and the Lévy-Itô decomposition, see \cite[Thm. 16.2]{kallenberg-foundations-2021}, yield
\begin{equation}\label{eq-ito}
\varphi(X_t)-\varphi(x)=\int_0^t\phi(X_s)\varphi^\prime(X_s)\dd s + \int_0^t\int_0^\infty \big(\varphi(X_{s-}+z)-\varphi(X_{s-})\big)\eta(\dd s,\dd z)
\end{equation}
with the random measure $\eta(\dd s, \dd z):=\sum_t\delta_{(t,\Delta X_t)}(\dd s,\dd z)$ with compensator  $\EE(\eta(\dd s,\dd z))=\dd s\mu(\dd z)$. As for every $t\geq 0$ and $x\in (0,\infty)$
\begin{equation*}
\EE^x\bigg(\int_0^t \int_0^\infty \abs{\varphi(X_{s-}+z)-\varphi(X_{s-})}\mu(\dd z)\dd s\bigg) \leq 2t \| \varphi \|_\infty \mu((0,\infty)) <\infty,
\end{equation*}
and the replacement of $\eta$ with its compensator under the expectation is justified, cf. \cite[Thm. 2.21]{schnurr-symbol-2009}. Further we observe that $\phi \varphi^\prime$ is bounded, since $\varphi$ has compact support and $\phi$ is locally bounded, and therefore via Fubini's theorem and \eqref{eq-ito} we can estimate
\begin{align*}
\frac{1}{t}\big| \EE^x\left(\varphi(X_t)\right)-\varphi(x)\big|
&\leq \frac{1}{t}\bigg|\int_0^t \EE^x \left(\phi(X_s)\varphi(X_s)\right)\dd s\bigg| \\
&\quad + \frac{1}{t}\int_0^t\int_0^\infty \big| \EE^x \left(\varphi(X_{s-}+z)-\varphi(X_{s-})\right)\big| \mu(\dd z)\dd s\\ 
&\leq \|\phi \varphi\|_\infty+2 \| \varphi\|_{\infty} .
\end{align*}
We have found a uniform upper bound which is clearly integrable w.r.t. any probability measure. These considerations allow to apply Lebesgue's theorem to
\begin{equation*}
	\left(\frac{1}{t}\left( \EE^x\left(\varphi(X_t)\right)-\varphi(x)\right)\right)_{0\leq t\leq 1}
\end{equation*}
as $t\to 0$ and assuming that $\bm{\nu}$ is an invariant measure we obtain by exchange of limit and integration
\begin{equation*}
	0 = \lim_{t\to 0}\int_0^\infty\left(\frac{1}{t}\EE^x\left(\varphi(X_t)\right)-\varphi(x)\right) \bm{\nu}(\dd x) = \int_0^\infty \mathcal{A}\varphi(x) \bm{\nu} (\dd x),
\end{equation*}
hence $\bm{\nu}$ is infinitesimally invariant.

Concerning uniqueness, because of the first conclusion of this theorem, any invariant measure $\bm{\nu}$ has to fulfil the distributional equation
\begin{equation} \label{dist equation}
-(\phi\bm{\nu})^\prime - (\tilde{\mu}\ast\bm{\nu})^\prime = 0,
\end{equation}
by \cite[Thms. 4.2 or 4.3]{behme-invariant-2024}, where $^\prime$ denotes the distributional derivative and $\tilde{\mu}(\dd x):=\overline{F}_\mu(x)\dd x$ is the integrated tail of $\mu$. Note that in Equation \eqref{dist equation} $\phi\bm{\nu}$ is to be understood as the distribution associated to the measure $\phi(x)\bm{\nu}(\dd x)$ and the convolution is meant as a convolution of measures, i.e.
\begin{equation*}
\int_0^\infty \mathds{1}_B(x)(\tilde{\mu}\ast\bm{\nu})(\dd x) =\int_0^\infty\int_0^\infty \mathds{1}_B(x+u)\tilde{\mu}(\dd x)\,\bm{\nu}(\dd u)= \int_0^\infty\int_0^\infty \mathds{1}_B(x+u)\overline{F}_\mu(x)\dd x\,\bm{\nu}(\dd u),
\end{equation*}
for a Borel set $B$. In particular $\phi\bm{\nu}$ is a distribution due to the local boundedness of $\phi$ and the first statement of Lemma \ref{distributional regularity}. We can rewrite
\begin{align*}
	\langle (\tilde{\mu}\ast\bm{\nu})^\prime,\varphi\rangle
	&= \int_0^\infty\int_0^\infty \varphi^\prime (x+y)\overline{F}_\mu(y)\dd y\bm{\nu}(\dd x) \\[7pt]
	&= \int_0^\infty\int_x^\infty \varphi^\prime (y)\overline{F}_\mu(y-x)\dd y\bm{\nu}(\dd x) \\[7pt]
	&= -\int_0^\infty\int_x^\infty \varphi(y)\overline{F}_\mu^\prime(y-x)\dd y\bm{\nu}(\dd x) - \int_0^\infty \varphi(x)\bm{\nu} (\dd x) \\[7pt]
	&= -\int_0^\infty\varphi(y)\left(\int_0^y \overline{F}_\mu^\prime(y-x)\bm{\nu}(\dd x)\right)\dd y - \int_0^\infty \varphi(x)\bm{\nu} (\dd x) \\[7pt]
	&= \int_0^\infty \varphi(x) \tilde{\bm{\nu}}(\dd x)
\end{align*}
with 
\begin{equation*}
	\tilde{\bm{\nu}}(\dd x) = -\left(\int_0^x\overline{F}^\prime_\mu(x-u)\bm{\nu}(\dd u)\dd x + \bm{\nu}(\dd x)\right).
\end{equation*}
In the computations above we have shifted the integration variable in the second equality, applied integration by parts in the third, justified by the absolute continuity of $\mu$, and changed the order of integration via Fubini's theorem in the fourth equality. This means that $(\tilde{\mu}\ast\bm{\nu})^\prime\in\mathcal{D}^{\prime 0}((0,\infty))$ and therefore $(\phi\bm{\nu})^\prime = -(\tilde{\mu}\ast\bm{\nu})^\prime\in \mathcal{D}^{\prime 0}((0,\infty))$ as well. By the third statement of Lemma \ref{distributional regularity} we can conclude that $\phi\bm{\nu}$ is regular which is the case if and only if $\bm{\nu}$ is regular. Finally, as we are only considering probability measures, $\bm{\nu}$ can be identified with an element of $\nu\in L^1(\dd x)$. On the level of functions we now compute for $\varphi\in \mathcal{C}_c^\infty((0,\infty))$
\begin{align*}
-\langle (\phi\bm{\nu})^\prime, \varphi\rangle - \langle (\tilde{\mu}\ast\bm{\nu})^\prime,\varphi\rangle 
&= \int_0^\infty \varphi^\prime(x)\phi(x)\nu(x)\dd x + \int_0^\infty \int_0^\infty \varphi^\prime(x+y)\overline{F}_\mu(y)\nu(x)\dd y\,\dd x \\[7pt]
&= \int_0^\infty \varphi^\prime(x)\phi(x)\nu(x)\dd x + \int_0^\infty \varphi^\prime(x)\int_0^x \overline{F}_\mu(y)\nu(x-y)\dd y\,\dd x \\[7pt]
&= \int_0^\infty \varphi^\prime(x)\big(\phi(x)\nu(x)+\overline{F}_\mu\ast\nu(x)\big)\dd x = 0,
\end{align*}
where the convolution is now a convolution of functions. Therefore, the second statement of Lemma \ref{distributional regularity} implies that
\begin{equation} \label{ode distributional solution}
\phi(x)\nu(x)+\overline{F}_\mu\ast\nu(x) = c \in\mathbb{R}.
\end{equation}
We claim that $c=0$. First of all, for any $x\in (0,\inf\supp(\mu))$ Equation \eqref{ode distributional solution} simplifies to the ODE in the generalised sense of Carathéodory
\begin{equation} \label{ode below support}
-\phi(x)\nu(x)=\int_0^x\nu(u)\dd u - c.
\end{equation}
This simplification stems from the fact that $\overline{F}_\mu(x)=1$ for all $x\in(0,\inf\supp(\mu))$. Since $\nu$ is a probability density and $-\phi> 0$ on $(0,\inf\supp(\mu))$
\begin{equation*}
\int_0^x\nu(u)\dd u  \geq c
\end{equation*}
for all $x\in (0,\inf\supp(\mu))$ and therefore $c\leq 0$. If we assume that $c<0$ then for any $x\in (0,\inf\supp(\mu))$ by lower bounding the right hand side of \eqref{ode below support} with $-c$, dividing by $-\phi$ and integrating we obtain
\begin{equation} \label{no drift onto 0}
\int_0^x\nu(u)\dd u \geq c \int_0^x\frac{1}{\phi(u)} \dd u = \infty,
\end{equation}
which contradicts the necessary integrability of $\nu$. Indeed, the divergence of the integral is due to the assumption $\int_0^x\pi(u)\dd u \leq c x\pi(x)$ for $x\ll 1$. We conclude that $c=0$.

From this point onwards the argumentation is analogous to \cite[Thm. 3.4 (iii)]{oechsler-levy-2024}. In more detail, one obtains a unique solution to \eqref{ode distributional solution} on $x\in(0,\inf\supp(\mu))$ by solving the ODE \eqref{ode below support} via Carathéodory's existence theorem, cf. \cite[Thm. 5.1]{hale-ordinary-1969}. The unique solution, up to a normalising constant, is then given by $\nu=\pi$. The fact that $\inf\supp(\mu)>0$ then implies that for $x\in[\inf\supp(\mu),2\inf\supp(\mu))$ it holds
\begin{equation*}
\int_0^xF_\mu(u)\nu(x-u)\dd u = \int^x_{\inf \mathrm{supp}(\mu)}F_\mu(u)\nu(x-u)\dd u = \int_0^xF_\mu(u)\pi(x-u)\dd u.
\end{equation*}
which allows for subsequent application of Carathéodory's existence theorem to obtain a unique solution on $[\inf\supp(\mu),2\inf\supp(\mu))$. Repeating this argument yields the unique solution $\nu=\pi$. Thus the invariant probability measure must be $\bm{\pi}$.
\end{proof}

Theorem \ref{sampling} relies on the assumption that $\mu(I)>0$ for all open intervals $I\subseteq (0,\infty)$. This assumption is clearly violated in the formulation of Theorem \ref{uniqueness invariant measure}. In order to take advantage of Theorem \ref{uniqueness invariant measure}, we construct a sequence $(X_t^n)_{t\geq 0}$ of processes that fulfill its assumptions and at the same time converges almost surely on compact sets w.r.t. $t$ to a solution $(X_t)_{t\geq 0}$ of \eqref{levy langevin diffusion} under the assumptions of Theorem \ref{sampling}. We start our exposure with two results that elaborate how weak convergence preserves invariant measures, and how Lipschitz properties of the target density translate to $\phi$, before constructing the mentioned sequence.

\begin{lemma}[Weak convergence preserves invariant measure]\label{lemweakconvergence}
Let $(X_t)_{t\geq 0}$ and $(X_t^n)_{t\geq 0}$ for each $n\in\mathbb{N}$ be a Markov process, all with state space $(0,\infty)$ and defined on the same probability space. Assume that $X_t^n\to X_t$ weakly as $n\to \infty$ for all $t\geq 0$, and that $\eta$ is an invariant measure for $(X_t^n)_{t\geq 0}$ for all $n\in\mathbb{N}$. Then $\eta$ is an invariant measure for $(X_t)_{t\geq 0}$ as well.
\end{lemma}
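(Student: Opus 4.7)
The plan is to verify invariance of $\eta$ for the limit process $X$ by passing to the limit $n\to\infty$ in the invariance identity that holds for each approximating process $X^n$. The starting point is the standard reformulation: a probability measure $\eta$ on $(0,\infty)$ is invariant for a Markov semigroup $(P_t)_{t\geq 0}$ if and only if
\begin{equation*}
\int_0^\infty \EE^x[f(X_t)]\,\eta(\dd x)=\int_0^\infty f(x)\,\eta(\dd x)
\end{equation*}
for every $t\geq 0$ and every bounded continuous $f\colon (0,\infty)\to\RR$, since such $f$ form a measure-determining class on the metric space $(0,\infty)$.

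Fixing such a $t$ and $f$, I would first use the weak convergence hypothesis, interpreted in the natural way $\PP^x(X_t^n\in\cdot)\Rightarrow \PP^x(X_t\in\cdot)$ for each deterministic starting point $x$, together with the Portmanteau theorem, to conclude $\EE^x[f(X_t^n)]\to \EE^x[f(X_t)]$ pointwise in $x$. Next, since $\eta$ is invariant for $X^n$, the identity
\begin{equation*}
\int_0^\infty \EE^x[f(X_t^n)]\,\eta(\dd x)=\int_0^\infty f(x)\,\eta(\dd x)
\end{equation*}
holds for every $n\in\NN$. The integrands $x\mapsto \EE^x[f(X_t^n)]$ are uniformly bounded by $\|f\|_\infty$, and $\eta$ is a probability measure, so dominated convergence justifies interchanging limit and integral on the left; this yields the same identity with $X_t^n$ replaced by $X_t$, and hence invariance of $\eta$ for $X$.

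The only subtlety, and the point where I would expect to spend a line clarifying the statement, is the precise sense of the weak convergence $X_t^n\to X_t$. Under the pointwise-in-$x$ reading just used, the argument is complete as above. If instead the hypothesis is read as $\PP^\eta$-weak convergence (i.e. starting from the measure $\eta$ itself), the proof actually collapses to a one-liner: by invariance the law of $X_t^n$ under $\PP^\eta$ equals $\eta$ for every $n$, so the weak limit of these constant laws is $\eta$, giving $\law_{\PP^\eta}(X_t)=\eta$ and thus invariance of $\eta$ for $X$. In either reading no real obstacle arises; the lemma is essentially a soft consequence of the fact that weak limits preserve identities between integrals of bounded continuous functions.
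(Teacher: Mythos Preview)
Your proof is correct and follows essentially the same approach as the paper: test invariance against a measure-determining class of bounded continuous functions, use the pointwise-in-$x$ weak convergence to get $\EE^x[f(X_t^n)]\to\EE^x[f(X_t)]$, and pass to the limit under the $\eta$-integral by dominated convergence. The only cosmetic difference is that the paper tests against the smaller separating class $\mathcal{C}_c((0,\infty))\cap\mathrm{Lip}_1((0,\infty),[0,1])$ (citing \cite{klenke_probability_2013}) rather than all of $\mathcal{C}_b$, but this changes nothing substantive.
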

\begin{proof}
It is known, since $(0,\infty)$ is locally compact, that $\mathcal{C}_c((0,\infty))\cap\text{Lip}_1((0,\infty),[0,1])$ is a separating class, where $\text{Lip}_1((0,\infty),[0,1])$ are the Lipschitz continuous functions mapping from $(0,\infty)$ to $[0,1]$ with Lipschitz constant $1$, cf. \cite[Thm. 13.11]{klenke-probability-2013}. This means that if for two measures $\nu_1$ and $\nu_2$ and all $\varphi\in \mathcal{C}_c((0,\infty))\cap\text{Lip}_1((0,\infty),[0,1])$ it holds
\begin{equation*}
\int \varphi(x) \nu_1(\dd x) = \int \varphi(x) \nu_2(\dd x),
\end{equation*}
then $\nu_1=\nu_2$.
It is also known that the space of bounded and continuous functions, $\mathcal{C}_b((0,\infty))$, is convergence determining, meaning that $X_t^n\to X_t$ weakly is equivalent to $\EE^x(\varphi(X_t^n))\to \EE^x(\varphi(X_t))$ for all $\varphi\in\mathcal{C}_b((0,\infty))$ and $x\in (0,\infty)$, cf. \cite[Thm. 13.16]{klenke-probability-2013}. The assertion is then a consequence of $\mathcal{C}_c((0,\infty))\cap\text{Lip}_1((0,\infty),[0,1])\subseteq \mathcal{C}_b((0,\infty))$. Indeed, let $\eta$ be invariant for $(X_t^n)_{t\geq 0}$ for all $n\in\mathbb{N}$ and define
\begin{equation*}
\tilde{\eta}_t(\dd u) := \int_0^\infty \PP^x(X_t \in \dd u) \eta(\dd x).
\end{equation*}
We have to show that $\tilde{\eta}_t=\eta$ for all $t\geq 0$. We pick $\varphi\in \mathcal{C}_c((0,\infty))\cap\text{Lip}_1((0,\infty),[0,1])$ and an arbitrary $t\geq0$ and compute
\begin{align*}
\int_0^\infty \varphi(u)\tilde{\eta}_t(\dd u) 
&= \int_0^\infty \EE^x(\varphi(X_t))\eta(\dd x) \\[7pt]
&= \int_0^\infty \lim_{n\to\infty} \EE^x(\varphi(X_t^n))\eta(\dd x) \\[7pt]
&= \lim_{n\to\infty}\int_0^\infty \EE^x(\varphi(X_t^n))\eta(\dd x) \\[7pt]
&= \int_0^\infty \varphi(u)\eta(\dd u).
\end{align*}
The exchange of limits is justified by Lebegue's theorem since $\EE^x(\varphi(X_t^n))$ is uniformly bounded in $x$ by the boundedness of $\varphi$. The last equality sign is justified by the assumption that $\eta$ is invariant for $(X_t^n)_{t\geq 0}$ for all $n\in\mathbb{N}$. Hence, $\tilde{\eta}_t=\eta$ for all $t\geq 0$ which proves the claim.
\end{proof}

\begin{lemma}[Regularity of $\phi$] \label{Lipschitz continuity coefficient}
Let  $\pi$ be the density of the target distribution $\bm{\pi}$, obeying the assumptions in Section \ref{assumptions}. Let $g:[0,\infty)\to[0,\infty)$ be bounded and Lipschitz continuous on $[0,T]$ for any $T>0$. Then the following statements hold.
\begin{enumerate}
\item Then $g\ast\pi$ is piecewise Lipschitz continuous w.r.t. the same partition as $\pi$.
\item Set $\phi(x):=g\ast\pi(x)/\pi(x)$. Then $\phi$ is piecewise Lipschitz continuous w.r.t. the same partition as $\pi$.
\end{enumerate}
\end{lemma}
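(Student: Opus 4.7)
The plan is to establish part 1 by the standard convolution-difference decomposition, and then to deduce part 2 from part 1 via the Lipschitz quotient rule.

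For part 1, with the convention that $g$ is extended by $0$ to the negative axis, one has $g\ast\pi(x)=\int_0^x g(x-u)\pi(u)\,\dd u$ for $x>0$. I would fix $i\in\ZZ$ and $x,y\in(x_i,x_{i+1})$ with $x<y$ and split
\[
g\ast\pi(y)-g\ast\pi(x)=\int_0^x\bigl[g(y-u)-g(x-u)\bigr]\pi(u)\,\dd u+\int_x^y g(y-u)\pi(u)\,\dd u.
\]
For the first term, since $u\in[0,x]$ forces $y-u,x-u\in[0,x_{i+1}]$, the Lipschitz hypothesis on $g$ restricted to that bounded interval together with $\int_0^\infty\pi\,\dd u=1$ bounds the term by a constant multiple of $|y-x|$. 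For the second term, the piecewise Lipschitz continuity of $\pi$ gives boundedness of $\pi$ on $(x_i,x_{i+1})$, so combined with the global boundedness of $g$ the term is controlled by a constant times $|y-x|$. This yields piecewise Lipschitz continuity of $g\ast\pi$ with respect to the same partition as $\pi$.

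For part 2, the key observation is that, since $0$ is the unique accumulation point of $(x_i)_{i\in\ZZ}$, every interval $[x_i,x_{i+1}]$ is a compact subset of $(0,\infty)$. The hypothesis $\inf_K\pi>0$ for compact $K$ therefore provides $c_i:=\inf_{[x_i,x_{i+1}]}\pi>0$. On $(x_i,x_{i+1})$, both $\pi$ and $g\ast\pi$ are Lipschitz (the latter by part 1), hence bounded, while $\pi$ is additionally bounded below by $c_i$. Writing
\[
\phi(y)-\phi(x)=\frac{\bigl[g\ast\pi(y)-g\ast\pi(x)\bigr]\pi(x)-g\ast\pi(x)\bigl[\pi(y)-\pi(x)\bigr]}{\pi(x)\pi(y)}
\]
and inserting the above bounds produces a Lipschitz constant for $\phi$ on $(x_i,x_{i+1})$ of order $c_i^{-2}$, completing the claim.

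The argument is essentially routine bookkeeping and no substantial obstacle arises. The only conceptual point requiring attention is the placement of each partition element inside a compact subset of $(0,\infty)$, which is what converts the local lower bound on $\pi$ into a usable constant on each piece; without the fact that $0$ is the unique accumulation point, the denominator in the quotient rule could degenerate near $0$ and piecewise Lipschitz continuity of $\phi$ would fail.
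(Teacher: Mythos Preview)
Your proposal is correct and follows essentially the same route as the paper: the convolution-difference splitting for part 1 (you add and subtract $\int_0^x g(y-u)\pi(u)\,\dd u$, while the paper adds and subtracts $\int_0^y g(x-u)\pi(u)\,\dd u$, a cosmetic variation) and the quotient rule for part 2. Your explicit remark that the unique accumulation point at $0$ is what makes each $[x_i,x_{i+1}]$ compact in $(0,\infty)$ is a useful clarification that the paper leaves implicit.
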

	\begin{proof}$\,$
Note that according to Section \ref{assumptions}, $\pi$ is piecewise Lipschitz continuous w.r.t. a partition $(x_i)_{i\in\mathbb{Z}}$ of $(0,\infty)$. \\
$1.$ Let $x,y\in (x_i,x_{i+1})$ with $x<y$, then introducing the term $\int_0^yg(x-u)\pi(u)\dd u$ yields 
\begin{align*}
\abs{g\ast\pi(y)-g\ast\pi(x)}
&= \bigg| \int_0^y g(y-u) \pi(u) \dd u - \int_0^x g(x-u)\pi(u) \dd u\bigg| \\
&\leq \int_0^y\abs{g(y-u)-g(x-u)}\pi(u)\dd u + \int_x^yg(x-u)\pi(u)\dd u \\
&\leq C \abs{y-x} + \sup_{x\in (x_i,x_{i+1})}\pi(x)\int_0^{y-x}g(u)\dd u \\
&\leq \left(C+C_{\pi,i}\cdot\tilde{C}\right)\abs{y-x},
\end{align*}
with $C_{\pi,i}:= \sup_{[x_i,x_{i+1}]}\pi(x)$. The second inequality is a direct consequence of the properties of $g$ and $\pi$, namely the Lipschitz continuity of $g$ with Lipschitz constant $C$, and the boundedness of $\pi$ on $(x_i,x_{i+1})$ since it is Lipschitz continuous on that set. Further, the mapping $G:x\mapsto\int_0^xg(u)\dd u$ is linearly bounded, since it is absolutely continuous, has bounded derivative and fulfills $G(0)=0$, justifying the last inequality, where $\tilde{C}$ is the coefficient associated to the linear boundedness of $G$.\\
$2.$ Again choose $x,y\in (x_i,x_{i+1})$ with $x<y$, then
\begin{align*}
\abs{\phi(x)-\phi(y)} &= \abs{\frac{\pi(y) \left(g \ast\pi\right)(x)-\pi(x)\left(g\ast\pi\right)(y)}{\pi(x)\pi(y)}} \\
&\leq c_{\pi,i}^{-2}\Big(\big| \pi(y)\big(g\ast\pi(x)-g\ast\pi(y)\big)-\big(\pi(x)-\pi(y)\big)g\ast\pi(y)\big| \Big)\\
&\leq \frac{C_{\pi,i}}{c_{\pi,i}^2}\abs{g\ast\pi(x)-g\ast\pi(y)} +\frac{C_{\ast,i}}{c_{\pi,i}^2}\abs{\pi(x)-\pi(y)},
\end{align*}
with constants
\begin{equation*}
	c_{\pi,i}:=\inf_{x\in[x_i,x_{i+1}]} \pi(x) \quad\text{and}\quad C_{\ast,i}:=\sup_{u\in [x_i,x_{i+1}]}g\ast\pi(u) \quad\text{and}\quad \text{$C_{\pi,i}$ as above}.
\end{equation*}
Further, by $1.$, $g\ast\pi$ is Lipschitz continuous on $(x_i,x_{i+1})$ and we write $\tilde{c}$ for the associated Lipschitz constant, while  $c$ is the Lipschitz constant for $\pi$ on $(x_i,x_{i+1})$. Then,
\begin{equation*} 
\abs{\phi(x)-\phi(y)} \leq \frac{\tilde{c}\cdot C_{\pi,i}+c\cdot C_{\ast,i}}{c_{\pi,i}^2}\abs{x-y} ,
\end{equation*}
which finishes the proof.
\end{proof}

\begin{theorem}[Approximation of $(X_t)_{t\geq 0}$] \label{approximation}
Let the assumptions of Section \ref{assumptions} hold. Further, assume that $\mu$ fulfills Condition 1. of Theorem \ref{sampling}. Let $L$ have representation \eqref{representation compound poisson} and define the compound Poisson processes $L^n=(L_t^n)_{t\geq 0}$, $n\in \NN$, via
\begin{equation*}
	L_t^n := \sum_{k=1}^{N_t} \bigg(\xi_k \mathds{1}_{\xi_k>\frac1n} + \frac{1}{n} \mathds{1}_{\xi_k\leq \frac1n}\bigg), \quad t\geq 0,	
\end{equation*}
then $L^n$ has Lévy measure $\mu_n$ given by
\begin{equation*}
	\mu_n(B)=\mu\Big(\big[0,\tfrac{1}{n}\big]\Big)\delta_{\frac{1}{n}}(B) + \mu\Big(\big(\tfrac{1}{n},\infty\big)\cap B\Big), \quad B\in \cB((0,\infty)).
\end{equation*} 
Let $(X_t)_{t\geq 0}$ be the strong solution of \eqref{levy langevin diffusion} and for every $n\in\NN$ let $(X_t^n)_{t\geq 0}$ be the strong solution of
\begin{equation*}
\dd X_t^n = \phi_n(X_t^n)\dd t + \dd L_t^n, \quad t\geq 0,
\end{equation*}
with $\phi_n(x):=-\overline{F}_{\mu_n}\ast\pi(x)/\pi(x)$ and such that $\PP^x(X_0=x)=\PP^x(X_0^n=x)=1$. Then 
\begin{equation*}
\lim_{n\to\infty} \sup_{t\in[0,T]}\vert X_t^n-X_t\vert =0 
\end{equation*}
almost surely w.r.t. $\PP^x$ for all $T>0,x\in (0,\infty)$.
\end{theorem}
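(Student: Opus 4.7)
The plan is to exploit that $L^n$ and $L$ can be coupled to share the same Poisson process $N$ and the same i.i.d. marks $(\xi_k)_{k\in\NN}$. Since $N_T<\infty$ almost surely and all $\xi_k>0$, the quantity $\delta(\omega):=\min_{1\leq k\leq N_T(\omega)}\xi_k(\omega)$ is strictly positive a.s., so that for every $n>1/\delta(\omega)$ the small-jump correction in the definition of $L^n$ is inactive and $L^n_t(\omega)=L_t(\omega)$ for all $t\in[0,T]$. Thus from a random index $n_0(\omega)$ onwards the jump parts of $X^n$ and $X$ agree on $[0,T]$, and the entire discrepancy between the two processes is caused by the different drifts $\phi_n$ and $\phi$.

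To control the drifts, I would first observe that
\begin{equation*}
	|\overline{F}_{\mu_n}(x)-\overline{F}_\mu(x)| = \mu((0,x])\mathds{1}_{\{0<x<1/n\}} \leq \mu\big((0,1/n]\big) \to 0,
\end{equation*}
uniformly in $x\in (0,\infty)$. Young's convolution inequality then gives $\|\overline{F}_{\mu_n}\ast\pi-\overline{F}_\mu\ast\pi\|_\infty\to 0$, and dividing by $\pi$, which is bounded below on every compact subset of $(0,\infty)$ by the assumptions of Section \ref{assumptions}, yields $\phi_n\to\phi$ uniformly on compact subsets of $(0,\infty)$. With this in hand, the argument proceeds by induction over the jump times $0=T_0<T_1<\ldots<T_{N_T}\leq T$ of $N$: given $X^n_{T_k}=X_{T_k}$, on the inter-jump interval $[T_k,T_{k+1}\wedge T)$ both $X^n$ and $X$ obey autonomous ODEs with drifts $\phi_n$ and $\phi$ starting from the common value $X_{T_k}$. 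Because $\phi<0$ forces $X$ to be monotone decreasing between jumps and the assumption $\int_0^x\pi(u)\dd u\leq cx\pi(x)$ for small $x$ prevents the trajectory from reaching $0$, the curve remains in a fixed compact $K\subset(0,\infty)$. A Gronwall-type continuous-dependence argument, combining the piecewise Lipschitz regularity of $\phi$ provided by Lemma \ref{Lipschitz continuity coefficient} with the uniform convergence $\phi_n\to\phi$ on $K$, then delivers $\sup_{t\in[T_k,T_{k+1}\wedge T)}|X^n_t-X_t|\to 0$. Since the jumps at $T_{k+1}$ are identical for $n\geq n_0$, the equality $X^n_{T_{k+1}}=X_{T_{k+1}}$ propagates, closing the induction, and as there are only finitely many jumps on $[0,T]$ a.s., iterating over them yields the uniform convergence claim.

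The main obstacle is carrying out the Gronwall estimate cleanly in view of the merely piecewise Lipschitz regularity of $\phi$. The workaround is to note that $\phi<0$ makes the trajectory of $X$ strictly decreasing between jumps, so it passes through each partition point $x_i$ of the discontinuity set at most once and therefore crosses only finitely many of them during any inter-jump interval. One subdivides $[T_k,T_{k+1}\wedge T)$ at these crossing times into finitely many pieces on each of which $\phi$ is genuinely Lipschitz in a neighbourhood of the trajectory; standard Gronwall applies piecewise and the resulting bounds concatenate. For $n$ large, the crossing times of $X^n$ through each $x_i$ are close to those of $X$ thanks to the uniform convergence of $\phi_n$, which guarantees that $X$ and $X^n$ lie in the same Lipschitz piece on matching subintervals, so that the piecewise comparison goes through.
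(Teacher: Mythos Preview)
Your approach mirrors the paper's: both couple $L^n$ and $L$ on the same marks, observe that the jump parts eventually coincide on $[0,T]$, reduce to a Gronwall comparison of the inter-jump ODEs with locally uniformly close drifts $\phi_n\to\phi$, and handle the merely piecewise Lipschitz nature of $\phi$ by subdividing at the crossing times of the partition points $x_i$. The paper packages this slightly differently, defining the stopping times $t_k$ to be \emph{either} jump times \emph{or} discontinuity-crossing times and treating both cases in parallel, but the substance is the same.

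There is, however, a genuine slip in your induction. You take as hypothesis $X^n_{T_k}=X_{T_k}$ and then assert that ``the equality $X^n_{T_{k+1}}=X_{T_{k+1}}$ propagates''. This is false: on $[T_k,T_{k+1})$ the two processes evolve under \emph{different} drifts $\phi_n$ and $\phi$, so even from a common starting value they separate, and Gronwall only yields $\sup_{[T_k,T_{k+1})}|X^n_t-X_t|\to 0$. Adding the identical jump at $T_{k+1}$ then gives $|X^n_{T_{k+1}}-X_{T_{k+1}}|\to 0$, not equality. The repair is routine---run the induction with the hypothesis $X^n_{T_k}\to X_{T_k}$ and absorb the small initial discrepancy into the Gronwall estimate---but as written the induction does not close. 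A second point you gloss over is why $X^n$ itself stays in the same compact set and the same Lipschitz pieces as $X$; since $\phi_n<\phi<0$, the path $X^n$ decays faster and could a priori cross a partition point $x_i$ well before $X$ does, which would break the Lipschitz comparison. The paper supplies this via a contradiction argument showing $t_k(n)\uparrow t_k$, and your final paragraph points at the right mechanism but would need to be made precise.
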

\begin{proof}
Let us fix $T,x>0$ and a sample $\omega\in\Omega$. By definition, we have $X_0=x=X_0^n$ almost surely w.r.t. $\PP^x$ and $(X_t^n)_{t\geq 0}$ fulfills the conditions of Theorem \ref{uniqueness invariant measure} for every $n\in\mathbb{N}$, implying that $\bm{\pi}$ is its unique invariant measure. We consider the paths $\chi(t):=X_t(\omega)$ and $\chi_n(t):=X_t^n(\omega)$.
To begin with, observe  that $\overline{F}_\mu=\overline{F}_{\mu_n}$ on $[\frac{1}{n},\infty)$, while for $x\in [0,\frac{1}{n})$
\begin{equation*}
\overline{F}_{\mu_n} (x)-\overline{F}_\mu(x) = \mu((0,x)) \leq\mu\big((0,\tfrac{1}{n})\big).
\end{equation*}
Thus, for $x\in(0,\infty)$
\begin{align} \label{phi_phin}
\abs{\phi(x)-\phi_n(x)} 
&= \frac{1}{\pi(x)}\bigg| \int_0^x (\overline{F}_{\mu_n}-\overline{F}_\mu)(u)\pi(x-u)\dd u \bigg|\leq \mu\left(\left(0,\tfrac{1}{n}\right)\right)\int_0^{\frac{1}{n}\wedge x} \frac{\pi(x-u)}{\pi(x)}\dd u.
\end{align}
By assumption, $\pi$ is a probability density and locally bounded away from $0$. Thus, for any compact set $K\subseteq (0,\infty)$ we have $\sup_{x\in K}\int_0^{\frac{1}{n}\wedge x} \frac{\pi(x-u)}{\pi(x)}\dd u < c(K)^{-1} <\infty$ for some positive constant $c(K)$. Therefore, locally $\abs{\phi(x)-\phi_n(x)}\to 0$ as $n\to \infty$ due to $\mu ((0,1/n))\to 0$.\\
Recall that $(x_i)_{i\in\mathbb{Z}}$ is the partition w.r.t. which $\pi$ is piecewise Lipschitz continuous and note that $x\in(x_i,x_{i+1}]$ for some $i\in\mathbb{Z}$. We define the time of the first jump or arrival at some $\pi$-discontinuity of $\chi(t)$ via
\begin{equation*}
t_1:=\inf\{t\geq 0\,\vert\, \Delta \chi(t)\neq 0\,\,\text{or}\,\,\chi(t)\in (x_i)_{i\in\mathbb{Z}}\},
\end{equation*}
and analogously
\begin{equation*}
t_1(n):=\inf\{t\geq 0\,\vert\, \Delta \chi_n(t)\neq 0\,\,\text{or}\,\,\chi_n(t)\in (x_i)_{i\in\mathbb{Z}}\}.
\end{equation*}
The aim is to analyse the behaviour of the paths $\chi(t)$ and $\chi_n(t)$ on $[0,t_1)$ and $[0,t_1(n))$ respectively. W.l.o.g. we assume that $x\in (x_i,x_{i+1})$ otherwise define $t_1$ and $t_1(n)$ with $(x_j)_{j\in\mathbb{Z}}\setminus\{x_{i+1}\}$. We discuss the two cases separately, a jump or arrival at a discontinuity. Assume first that there is a jump at $t_1$ and note that by construction the jump times of $(X_t)_{t\geq 0}$ and $(X_t^n)_{t\geq 0}$ coincide. We also observe that by construction $\phi_n(x)<\phi(x)<0$ for all $x>0$, and because of this on $(0,t_1)$ the path $\chi_n$ decays faster than $\chi$ for all $n\in\mathbb{N}$. We claim that $t_1(n)\uparrow t_1$ and assume for contradiction that $t_1(n)\uparrow\tilde{t}_1<t_1$. Since the jump times conincide it must hold that $\chi_n(t_1(n))=x_i$. Let us fix $m\in\mathbb{N}$ and conclude that for all $n\geq m$ and $t\in [0,t_1(m))$,
\begin{align*}
\abs{\chi(t)-\chi_n(t)} &\leq \int_0^{t_1(m)}\left(\abs{\phi(\chi(s))-\phi(\chi_n(s))} + \abs{\phi(\chi_n(s))-\phi_n(\chi_n(s))}\right) \dd s \\ 
&\leq \tilde{K}_i\int_0^{t_1(m)}\abs{\chi(s)-\chi_n(s)} \dd s + t_1(m)\frac{\mu\left((0,\frac{1}{n})\right)}{c_{\pi,i}},
\end{align*}
since the paths obey the ODE \eqref{ode} with $\phi$ or $\phi_n$ respectively. Here, we have used the piecewise Lipschitz continuity of $\phi$ with Lipschitz constant $\tilde{K}_i$ associated to $(x_i,x_{i+1})$, justified by Lemma \ref{Lipschitz continuity coefficient}, as well as the bound \eqref{phi_phin} and local lower bound of $\pi$ associated to $(x_i,x_{i+1})$ denoted by $c_{\pi,i}$. Applying Gronwall's lemma, cf. \cite[Lem. 26.9]{klenke-probability-2013}, then yields
\begin{equation} \label{path convergence}
\abs{\chi(t)-\chi_n(t)} \to 0
\end{equation}
as $n\to\infty$ for all $t\in [0,t_1(m))$. As $t_1(n)\uparrow\tilde{t}_1$ this extends to $[0,\tilde{t}_1)$, and since $\tilde{t}_1<t_1$ and $t_1$ is the first jump, we conclude via the continuity of the paths that this even extends to $[0,\tilde{t}_1]$. Similarly, for any $t\in[0,t_1)$ we have that $\chi(t)-x_i\geq\varepsilon$ for some $\varepsilon >0$. This implies that there exists some $n_0$ such that $\abs{\chi(\tilde{t_1})-\chi_n(\tilde{t}_1)}\geq\varepsilon$ for all $n\geq n_0$. This is a contradiction and therefore $t_1(n)\uparrow t_1$ implying that \eqref{path convergence} holds on $[0,t_1]$, in particular because $\chi$ and $\chi_n$ only differ in jumps of size smaller than $1/n$ by at most $1/n$. \\ 
At this point, similarly to \cite[Thm. 3.4 (iii)]{oechsler-levy-2024}, one can iterate this step. To formalise this we define
\begin{equation*}
	t_2:=\inf\{t\geq t_1\,\vert\, \Delta \chi(t)\neq 0\,\text{or}\,\chi(t)\in (x_i)_{i\in\mathbb{Z}}\}
\end{equation*}
and $t_2(n)$ analogously. Still in the case of $t_1$ being associated to a jump, there exists $n_0\in\mathbb{N}$ s.t. $\Delta L_{t_1}(\omega)=\Delta L_{t_1}^n(\omega)$ for all $n\geq n_0$, due to the coupling of $L$ and $L^n$. In conclusion, for any $\varepsilon>0$ there exists $n_0\in\mathbb{N}$ s.t. for all $n\geq n_0$ we have $\abs{\chi(t_1)-\chi_n(t_1)}\leq\varepsilon$. Choosing $\varepsilon$ small enough ensures that $\chi$ and $\chi_n$ jump into the same interval $(x_\ell,x_{\ell+1})$ where $\phi$ is Lipschitz continuous. It is important to note that by the jump properties of the underlying Poisson process $N_t$ and the discontinuities $(x_i)_{i\in\mathbb{Z}}$, the probability of a jump and a discontinuity at the same time is $0$ for $(X_t)_{t\geq 0}$ and $(X_t^n)_{t\geq 0}$ for all $n\in\mathbb{N}$. Also the occurence of a jump into a discontinuity has probability $0$. This is evident for $(X_t)_{t\geq 0}$, but also for $(X_t^n)_{t\geq 0}$ it is true since we only have one Dirac measure at $\frac{1}{n}$, which implies that for this to happen we must jump exactly at one time point. This means that eventually $t_2(n)>t_1$ and we can repeat the previous reasoning on $[t_1,t_2)$ by estimating for all $t\in[t_1,t_2)$
\begin{equation} \label{iteration}
	\abs{\chi(t)-\chi_n(t)} \leq \abs{\chi(t_1)-\chi_n(t_1)} + \int_{t_1}^{t_2} \abs{\phi(\chi(s))-\phi(\chi_n(s))} \dd s + \int_{t_1}^{t_2} \abs{\phi(\chi_n(s))-\phi_n(\chi_n(s))} \dd s.
\end{equation}
The first and third summands converge to $0$ as $n\to\infty$ due to our previous arguments. The second summand can be decomposed in the following way
\begin{equation*}
	\int_{t_1}^{t_2} \abs{\phi(\chi(s))-\phi(\chi_n(s))} \dd s = \int_{t_1}^{t_2(n)} \abs{\phi(\chi(s))-\phi(\chi_n(s))} \dd s + \int_{t_2(n)}^{t_2} \abs{\phi(\chi(s))-\phi(\chi_n(s))} \dd s.
\end{equation*}
The second term converges to $0$ as $n\to\infty$ due to local boundedness of $\phi$ and the first permits using the Lipschitz continuity on $(x_\ell,x_{\ell +1})$ and subsequently a repeated application of Gronwall's lemma.

Now assume the other case, meaning that $\chi$ hits a discontinuity of $\phi$ in $t_1$. The same application of Gronwall's lemma yields \eqref{path convergence} on sets of the form $[0,t_1(m))$. Again, we claim that $t_1(n)\uparrow t_1$ in this case and assume for contradiction that $t_1(n)\uparrow\tilde{t}_1<t_1$. As before we deduce that $\chi_n(t)\to \chi(t)$ for all $t\in [0,\tilde{t}_1)$. But this implies that $\chi(\tilde{t}_1)=\lim_{n\to\infty}\chi_n(t_1(n))=x_i$, contradicting the properties of $t_1$. Therefore $t_1(n)\uparrow t_1$ and convergence on $[0,t_1]$. The repetition of the argument for $[t_1,t_2)$ is analogous to the previous case. \\
To finish, we claim that $t_n\uparrow \infty$ as $n\to\infty$ for almost all $\omega$ and therefore the sequence eventually surpasses the arbitrarily chosen $T>0$. This is due to $\PP(N_T=\infty)=0$, the assumption that the partition $(x_i)_{i\in\mathbb{Z}}$ has no accumulation points in $(0,\infty)$ and the assumption ensuring that the path cannot drift to $0$ in finite time. At this point we can conlude that $\chi_n(t)\to \chi(t)$ for all $t\in[0,T]$. But noticing the iterative nature of our argument, evident from \eqref{iteration}, we can even conclude that the convergence holds uniformly on $[0,T]$. This finishes the proof.
\end{proof}

\subsection{Ergodicity} \label{ergodicity}

Having proved existence and uniqueness of the invariant distribution $\bm{\pi}$, it remains to show that this target distribution is actually attained by $X$ as $t\to\infty$. According to \cite[Thm. 6.1]{meyn-stability-1993}, a positive Harris recurrent Markov process with unique invariant measure is $1$-ergodic if and only if some $\Delta$-skeleton chain of the Markov process is irreducible. For a process $(X_t)_{t\geq 0}$, the \emph{$\Delta$-skeleton chain} is defined by $(X_{n\Delta})_{n\in\mathbb{N}}$, $\Delta >0$, and it is called \emph{$\varphi$-irreducible} if there exists a $\sigma$-finite measure $\varphi$ s.t. for all $x\in (0,\infty)$ and $B\in\mathcal{B}((0,\infty))$ with $\varphi(B)>0$ there exists $m\in\mathbb{N}$ s.t.
\begin{equation*}
	\PP^x(X_{m\Delta}\in B) > 0.
\end{equation*}
We call $\varphi$ the irreducibility measure and call the $\Delta$-skeleton chain just \emph{irreducible} if it is $\varphi$-irreducible for some $\varphi$.

\begin{lemma}[Irreducibility of $1$-chain] \label{irreducible 1-chain half line}
Let the assumptions in Section \ref{assumptions} hold. Further assume that the Lévy measure $\mu$ fulfills Condition 1. of Theorem \ref{sampling}. Then the $1$-skeleton chain $(X_n)_{n
\in \NN}$ is irreducible with irreducibility measure $\lambda$, the Lebesgue measure.
\end{lemma}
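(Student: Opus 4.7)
The plan is to show that for any \(x \in (0,\infty)\) and any \(B \in \cB((0,\infty))\) with \(\lambda(B) > 0\), we have \(\PP^x(X_m \in B) > 0\) for some \(m \in \NN\), by restricting to a one-jump scenario in the last time interval \([m-1,m]\) whose conditional law dominates Lebesgue near a density point of \(B\).

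First, by Lebesgue's density theorem, I would fix a density point \(y^\ast\) of \(B\) and \(\delta > 0\) with \(B' := B \cap (y^\ast - \delta, y^\ast + \delta)\) satisfying \(\lambda(B') > 0\). Since \(\overline{F}_\mu \ast \pi > 0\) on \((0,\infty)\) (using \(\mu(I) > 0\) for open \(I\)) and \(\pi > 0\), we have \(\phi < 0\) strictly; hence the flow \(u_x\) of \eqref{ode} is strictly decreasing, and cannot converge to a positive limit (where \(\phi\) would have to vanish), so \(u_x(t) \to 0\). Pick \(m\) large enough that \(u_x(m) < y^\ast - \delta\).

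Next, I would localize on the event \(E_m := \{N_{m-1} = 0,\ N_m - N_{m-1} = 1\}\), which has probability \(e^{-m} > 0\). Writing \(\psi_s\) for the flow operator of \eqref{ode}, on \(E_m\) the unique jump occurs at time \(m-1+\tau\) with \(\tau \sim \mathrm{Unif}(0,1)\) and has size \(\xi \sim \mu\) (independent of \(\tau\)), so \(X_m = \psi_{1-\tau}(u_x(m-1+\tau) + \xi) =: T(\tau, \xi)\) and Fubini gives
\begin{align*}
\PP^x(X_m \in B' \mid E_m) = \int_{(0,\infty)} \lambda\bigl(\{\tau \in [0,1] : T(\tau, \xi) \in B'\}\bigr)\, \mu(d\xi).
\end{align*}
Using \(\mathrm{supp}(\mu) = [0,\infty)\), I would select an open interval \(V\) with \(\mu(V) > 0\) such that \(T(1, \xi) = u_x(m) + \xi > y^\ast + \delta/2\) and \(T(0, \xi) = \psi_1(u_x(m-1) + \xi) < y^\ast - \delta/2\) for every \(\xi \in V\); the latter inequality is achievable for \(m\) sufficiently large because \(u_x(m-1) \to 0\) and \(\psi_1\) is continuous. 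For such \(\xi\), the absolutely continuous (indeed Lipschitz) map \(H_\xi(\tau) := T(\tau, \xi)\) sweeps across \(y^\ast\); the one-dimensional area formula gives \(\lambda(H_\xi(\{H_\xi' = 0\})) = 0\), so \(H_\xi\) is a local diffeomorphism on an open set whose image contains a neighborhood of \(y^\ast\). This yields \(\lambda(H_\xi^{-1}(B')) > 0\), hence \(\PP^x(X_m \in B' \mid E_m) > 0\), and finally \(\PP^x(X_m \in B) \geq \PP(E_m)\, \PP^x(X_m \in B' \mid E_m) > 0\).

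The main obstacle is the absolute-continuity step for \(H_\xi\). Since \(\phi\) is only locally Lipschitz, \(H_\xi\) is merely Lipschitz and Sard's theorem does not apply verbatim, but the area formula for AC functions in one dimension provides \(\lambda(H_\xi(\{H_\xi' = 0\})) = 0\), which is all that is needed. A robust alternative, valid in both applications of this lemma where \(\mu\) is absolutely continuous (Theorems \ref{ergodicity subexponential} and \ref{ergodicity regular variation}), is to exploit instead the strict positivity \(\partial_\xi T = \partial_z \psi_{1-\tau}(u_x(m-1+\tau) + \xi) > 0\), which follows from Grönwall applied to the variational equation for \(\psi_s\), and use \(\xi\)-smoothing to deduce absolute continuity of the conditional law of \(X_m\) on \(E_m\) directly.
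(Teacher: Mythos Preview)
Your overall strategy---condition on a single jump and use the resulting one-parameter smearing to dominate Lebesgue measure locally---is sound, and is in fact more detailed than what the paper itself supplies (the paper simply cites \cite[Lemma 3.6]{oechsler_levy_2024}). However, the selection of $V$ contains a genuine gap. You require $\xi\in V$ to satisfy both $T(1,\xi)=u_x(m)+\xi>y^\ast+\delta/2$ and $T(0,\xi)=\psi_1(u_x(m-1)+\xi)<y^\ast-\delta/2$. Since $u_x(m-1),u_x(m)\to 0$ as $m\to\infty$, these conditions reduce asymptotically to $\xi>y^\ast+\delta/2$ and $\psi_1(\xi)<y^\ast-\delta/2$, which together force the time-$1$ flow started above $y^\ast+\delta/2$ to end below $y^\ast-\delta/2$. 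Nothing in the assumptions guarantees this: if $|\phi|<\delta/3$ on a neighbourhood of $y^\ast$, the flow cannot traverse distance $\delta$ in one time unit, and no such $V$ exists. Enlarging $m$ does not help---it only shrinks $u_x(m-1)$, not the displacement achieved by $\psi_1$.

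The repair is simple: condition instead on $\{N_m=1\}$, a single jump somewhere in $[0,m]$, rather than on $E_m=\{N_{m-1}=0,\ N_m-N_{m-1}=1\}$. Then $T(0,\xi)=\psi_m(x+\xi)$, which does tend to $0$ as $m\to\infty$ for every fixed $\xi$, so both endpoint inequalities can be arranged for $m$ large; your area-formula argument then goes through (with $B'$ replaced by $B'\cap(y^\ast-\delta/2,y^\ast+\delta/2)$, which is still of positive measure by the density-point choice). Your alternative via $\xi$-smoothing is also correct and arguably cleaner, but note it requires $\mu$ to be absolutely continuous---true in both applications (Theorems \ref{ergodicity subexponential} and \ref{ergodicity regular variation}), but not part of the lemma's hypotheses.
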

\begin{proof}
	This can be proven in analogy to \cite[Lemma 3.6]{oechsler-levy-2024}. It essentially holds due to the property of the Lévy measure to be supported on every open interval.
\end{proof}

In practice, the speed of ergodicity is decisive, as discussed in the introduction. According to \cite[Thm. 6.1]{meyn-stability-1993-1}, the following strengthened condition on the pointwise generator implies exponential $h$-ergodicity of the solution of \eqref{levy langevin diffusion} to the invariant measure, given that all compact sets are petite for some skeleton chain of the Markov process.

\begin{condition}[Exponential ergodicity] \label{exp recurrence condition}
There exist constants $c>0,d<\infty$, and a norm-like function $h$ such that
\begin{equation*}
\mathcal{A}h(x)\leq -ch(x) + d, \quad x\in(0,\infty).
\end{equation*}
\end{condition}

Since in Lemma \ref{compacts petite} we have seen that all compact sets are petite for $(X_t)_{t\geq 0}$, it remains to show that there exists some $\Delta$-skeleton chain s.t. all compact sets are petite for this chain. Indeed the results so far imply that this holds for any skeleton chain which is an immediate consequence of \cite[Prop. 6.1]{meyn-stability-1993}. We are now in the position to close this section with the proof of Theorem \ref{sampling}.
\begin{proof}[Proof of Theorem \ref{sampling}]
According to Theorem \ref{distributional equation}, we have constructed $X=(X_t)_{t\geq 0}$ s.t. the target distribution $\bm{\pi}$ is infinitesimally invariant for $X$. We recall that already under the assumptions made in Section \ref{assumptions} all compact sets are petite for the strong solution to \eqref{levy langevin diffusion}, as shown in Lemma \ref{compacts petite}. Note that this is independent of the specific assumptions made in Theorem~\ref{sampling} w.r.t. the target. Further, according to Theorem \ref{ergodicity}, the assumptions made in Theorem \ref{sampling} w.r.t. the Lévy measure imply that $X$ is positive Harris recurrent, which then implies the existence of a unique invariant probability measure, cf. \cite[Thm. I.3]{azema-mesure-1967}.

In a next step, in Theorem \ref{uniqueness invariant measure} and Theorem \ref{approximation} we have shown that this invariant probability measure actually agrees with the infinitesimally invariant probability measure $\bm{\pi}$. As pointed out above, at this point we can conclude that $X$ is 1-ergodic towards $\bm{\pi}$ due to \cite[Thm. 6.1]{meyn-stability-1993} whose conditions are complete with Lemma \ref{irreducible 1-chain half line} and the fact that Condition~\ref{recurrence condition} holds. 

In a final step, a close look to the proof of Theorem \ref{sampling} reveals that Condition \ref{recurrence condition} is in fact fulfilled with the particular function $g=h$, which amounts to Condition \ref{exp recurrence condition} being true for $X$. In particular, this holds since our estimate remains true by substitution $d\mathds{1}_C$ with $d$. 
\end{proof}

\section{Conclusion and outlook}

Our method successfully produces samples from heavy-tailed target distributions. In particular, due to the choice of a Lévy measure with tail behaviour matching the target distributions tail, it not only achieves exponential ergodicity in case of regularly varying target distributions, but also for heavy-tailed target distributions with tails lighter and heavier than Pareto tails. This expands the class of heavy tailed distributions that can be sampled from in exponential speed handled in \cite{huang-approximation-2020} and \cite{zhang-ergodicity-2023}. 

As mentioned in the introduction, apart from being less demanding in terms of smoothness of the target density, the method of this work is also advantageous in two aspects in comparison to the aforementioned publications. First, the choice of a compound Poisson process, in contrast to a stable process, allows for an exact simulation of the driving noise, while the infinite intensity of a stable process always needs a truncation in order to simulate it. Second, the drift coefficient given by a fractional Laplacian in \cite{huang-approximation-2020} and \cite{zhang-ergodicity-2023} probably is harder to approximate in an implementation in comparison to the drift coefficient $\phi$ in the present work, which is accessible by Monte Carlo methods due to the finite-intensity jumps.

Last, and so far unmentioned, the simple form of SDE \eqref{levy langevin diffusion} in case of a compound Poisson noise, allows for the numerical scheme that consists of simulating the jump times of the Poisson process associated to $L_t$ and solve the (deterministic) autonomous differential equation in between jumps. The analysis of the numerical error introduced by approximating the solution to \eqref{levy langevin diffusion} should therefore be favorable in our setting and is one of the main future directions of this work.

To finish, as we mentioned in the introductory section, MCMC methods obtain their right to existence from their superior performance in multivariate problems. We therefore identify the generalisation to the multivariate case as the other main direction of future work. This is intertwined with the aforementioned numerical approximation, especially due to the necessity to additionally approximate the multivariate integral appearing in $\phi$.

\section*{Statements and declarations}

\subsection*{Acknowledgement}

We thank the anonymous referee for their time and valuable comments, in particular those that led to a clearer formulation of the main theorem.

\subsection*{Funding}

The authors acknowledge financial support from the Federal Ministry of Education and Research of Germany and from the Sächsisches Staatsministerium für Wissenschaft, Kultur und Tourismus under the program Center of Excellence for AI Research, “Center for Scalable Data Analytics and Artificial Intelligence Dresden/Leipzig”, project identification number: ScaDS.AI.

\end{document}